\DeclareMathOperator{\glb}{glb}
\DeclareMathOperator{\lub}{lub}
\newcommand{\comment}[1]{}
\newcommand{\half}{.5}
\newcommand{\tuple}[1]{\ensuremath{\langle #1 \rangle}}
\newcommand{\G}{\Gamma}
\newcommand{\Del}{\Delta}
\newcommand{\sqq}[2]{\ensuremath{#1 \Yright #2}}
\newcommand{\SQQ}[2]{\ensuremath{#1 \tstile #2}}
\newcommand{\vs}{\vskip 5mm}
\newcommand{\hs}{\hskip 5mm}
\newcommand{\LLl}[1]{\LeftLabel{\scriptsize #1:\quad}}
\newcommand{\AXx}[2]{\Axiom$ #1 \fCenter #2$}
\newcommand{\UIx}[2]{\UnaryInf$ #1 \fCenter #2$}
\newcommand{\BIx}[2]{\BinaryInf$ #1 \fCenter #2$}
\newcommand{\val}[1]{\mbox{$[\![  #1 ]\!]$}}
\newcommand{\tstile}{\vDash}
\newcommand{\Sc}{\tstile_{\text{S}}}
\newcommand{\STc}{\tstile_{\text{ST}}}
\newcommand{\Cc}{\tstile_{\text{C}}}
\newcommand{\STsim}{\text{ST$_{\sim}$}}
\newcommand{\para}{\mathcal{P}}
\newcommand{\pc}[1]{\tstile_{#1}}
\newcommand{\pcp}{\pc{\para}}
\newcommand{\psc}[1]{\tstile^{\sim}_{#1}}
\newcommand{\STsc}{\psc{\text{ST}}}
\newcommand{\pscp}{\psc{\para}}
\newcommand{\iscrisped}{\preccurlyeq}
\theoremstyle{fact}
\newtheorem{fact}{Fact}
\newtheorem{lem}{Lemma}
\newtheorem{thm}{Theorem}
\theoremstyle{definition}
\newtheorem{defn}{Definition}
\def\fCenter{\,\vdash\,}
\newcommand{\addPE}[1]{{\leavevmode\color{black}#1}}
\newcommand{\addEP}[1]{{\leavevmode\color{black}#1}}
\begin{document}

\title*{Tolerance and degrees of truth\thanks{This is the penultimate version of a paper to appear in a volume edited by M. Petrolo and G. Venturi. Please cite the published version once available. Acknowledgments appear in a separate section after the main text.}}
\author{Pablo Cobreros, Paul \'{E}gr\'{e}, David Ripley, Robert van Rooij}
\institute{Pablo Cobreros \at University of Navarra,
\email{pcobreros@unav.es}
\and Paul \'{E}gr\'e \at Institut Jean-Nicod (CNRS, EHESS, ENS-PSL University), \email{paul.egre@ens.psl.eu}
\and David Ripley \at Monash University,
\email{davewripley@gmail.com}
\and Robert van Rooij \at ILLC, Amsterdam,
\email{R.A.M.vanRooij@uva.nl}
}
%
%
\maketitle

\abstract{This paper explores the relations between two logical approaches to vagueness: on the one hand the fuzzy approach defended by \cite{smith:vdt}, and on the other the strict-tolerant approach defended by \cite{cervr:tcs}. Although the former approach uses continuum many values and the latter \addPE{implicitly} four, we show that both approaches can be subsumed under a common three-valued framework.
In particular, we {defend the claim} that Smith's continuum many values are not needed to solve what Smith calls `the jolt problem', and we show that they are not needed for his account of logical consequence either.
Not only are three values enough to satisfy Smith's central desiderata, but they also allow us to internalize Smith's closeness principle in the form of a tolerance principle at the object-language. The reduction, we argue, matters for the justification of many-valuedness in an adequate theory of vague language.}
%


\newpage

\section{Introduction}

In this paper, we explore the relations between two logical approaches to vagueness:\
the degree-theoretic approach of \cite{smith:vdt} and the strict-tolerant account of vagueness originally laid out in \cite{cervr:tcs}. 
At first glance, these approaches look quite different: the former is based on a fuzzy logic, that is on continuum many truth values, whereas the latter is implicitly based on a four-valued logic.
Conceptually, the approaches differ still further: on Smith's approach, the principle of \emph{tolerance} is rejected (the principle whereby anyone imperceptibly shorter than a tall person must count as tall), to be replaced with a weaker principle of \emph{closeness} (according to which for any two persons $x$ and $y$ with imperceptibly distinct heights, the semantic values of ``$x$ is tall'' and ``$y$ is tall'' must be close). Smith moreover articulates both principles as metalinguistic constraints. 
On the strict-tolerant logic of vagueness developed in \cite{cervr:tcs}, on the other hand, the principle of tolerance is {stated} in the object-language, and it is valid. 

Our point of departure in this paper is that there is less to these differences than meets the eye. The two approaches have much in common. In particular, they can both be cast into a common three-valued framework \cite{cervr:vtpc}, as we proceed to explain below.
This is because the accounts both work with a common structure: they have it that whenever all the premises of an argument hold to some particular strong standard, the conclusion must hold to some particular weaker standard (a form of what we call ``permissive consequence'' in \cite{cervr:vtpc}).
It is this common structure that is directly captured by a three-valued framework.
Moreover, despite the kinds of models these approaches use, both stick to full first-order classical logic, validating every classically-valid argument.
\cite{cervr:tcs} but not \cite{smith:vdt} extends the object language to allow for the principle of tolerance to be stated.{\footnote{{Strict-tolerant logic conservatively extends classical logic (it is fully classical over its $\sim$-free fragment), but it is nontransitive over its $\sim$-full fragment. The loss of transitivity is arguably a non-classical feature of ST-logic. See \cite{cervr:tcs} and below for discussion.}}}

In what follows, we consider extended versions of both approaches, in order to bridge the gap between them. One of our goals in this paper is logical: we introduce a family of consequence relations, which we call \emph{parameterized consequence} relations, which basically subsume both Smith's consequence and strict-to-tolerant consequence ({abbreviated ST-consequence}) as particular cases. We use the framework to show that strict-to-tolerant consequence in a sense occupies a unique position among this family of consequence relations when the language is sufficiently expressive to accommodate tolerance principles for vague predicates.  Our main goal, however, is more philosophical, and concerns the role and the number of truth values in an adequate theory of vague language. 

Two roles are sometimes distinguished for truth values: a \emph{referential} role, used to assign values to sentences, and a logical or \emph{inferential} role, concerned with entailment relations between sentences (\cite{suszko1977fregean, malinowski:cilmv, shramko2011truth}). Smith's theory and the strict-tolerant theory do not use the same number of truth values to serve as references for sentences, but our argument will show that the richness of Smith's referential apparatus can be cut down to three values when it comes to capturing central inferential principles about vagueness. There is a clear and precise sense, therefore, in which the strict-tolerant approach is canonical for the kind of treatment of vagueness advocated by Smith. This reduction is not meant to offer a full-fledged theory of the relation between referential and inferential many-valuedness for vague language, but we take it to be a step in that direction, not least because vagueness is an area in which 
the introduction of intermediate truth values between 0 and 1 {has been a source of controversy and remains in need of further justification (see \cite{williamson:v, haack1996deviant} and \cite{smith:vdt} for rival views)}.



Our paper is structured as follows. 
The next section gives a brief review {and a philosophical discussion} of Smith's account of closeness and tolerance, rebutting his arguments against three-valued approaches to vagueness. 
\S\ref{smithst} gives a more detailed formal comparison between Smith's account and the strict-tolerant account. 
\S\ref{paracons} introduces the notion of parameterised consequence and shows how to embed both Smith's consequence and ST-consequence under that scheme. 
In \S\ref{discussion}, finally, we draw more general lessons from this comparison, in particular regarding the truth values needed to study vagueness.


\section{Tolerance and closeness} \label{jolt}

Vague predicates seem to exhibit a phenomenon known as {\em tolerance} \cite{wright:ocvp, kamp:poh}. 
For example, consider the vague predicate ``young'': there seem to be differences in age too small to matter for youngness (such as a difference of a nanosecond for age in humans, at least in most contexts).
Tolerance is the claim that this seeming is correct: that there really are differences too small to matter.
In general, for a predicate $P$, $P$-similarity (which we write $\sim_P$) is a relation that holds between things when, according to tolerance, they are too similar in whatever respects matter for $P$ for it to be the case that $P$ applies to one of them but not the other. 
The principle of tolerance may then be stated as follows:

\ex. \label{tol} $\forall x \forall y (Px \wedge x\sim_{P} y \rightarrow Py)$\quad (Tolerance)

This is a notoriously problematic principle in two-valued classical logic, {for conjoined with the existence of a soritical series}, it gives rise to the sorites paradox. 
For vague $P$, it is easy to imagine a list of individuals, each $P$-similar to the next, but where the first is obviously $P$ and the last obviously not $P$.
The way this problem is evaded in classical logic usually involves a rejection of the tolerance principle.\footnote{Bare rejection of the tolerance principle, of course, doesn't give much of a useful theory.
Because of this, rejection of tolerance is often paired with some explanation of its intuitive appeal.
These explanations might involve, for example, epistemology \cite{sorensen:vac, williamson:v}, context-sensitive concepts \cite{fara:ss, raffman:vcr}, or pragmatic restrictions \cite{manor:solving, gaifman:vtcl, rooij2011implicit, pagin:vdr, gt:slpdpv}.
Tolerance can be maintained, at least in some cases, by denying that there can be such a chain of $P$-similar individuals in the first place; see \cite{fara:pcas}.}
Rejecting tolerance in this two-valued setting, however, leaves us with the idea that in every sorites series,  there are at least two individuals $a$ and $b$ that are very close in $P$-relevant respects, but where $Pa$ and $Pb$ are still assigned opposite truth values.\footnote{The needed quantifier move here---from $\neg\forall$ to $\exists\neg$---is not valid in intuitionistic logic.
We don't discuss intuitionistic approaches to the sorites here; see \cite{rw:hpt, wright:quandary, rumfitt:bst} for more.}

This violation of tolerance leaves us with a sudden jolt in truth values as we proceed along a sorites series; Smith thus calls it `the jolt problem'.
(On Smith's view, by contrast, the core of vagueness lies in the absence of such jolts:  gradual modifications of the features relevant for a property should be matched by gradual modifications of the  truth values assigned to the claim that the property applies.)
Consequently, neither tolerance nor its negation can provide a satisfactory account of vagueness in this two-valued setting: tolerance because it leads to paradox, and the negation of tolerance because of the jolt problem. 

According to Smith, the proper way to evade this dilemma is to abandon two-valued logic and replace the principle of tolerance by a principle of \emph{closeness}. Given a vague predicate $P$, closeness states that if two objects $a$ and $b$ are $P$-similar, then the sentences $Pa$ and $Pb$ should have truth values that are very close to each other. 
Formally, this may be represented as follows, letting $\sim_{T}$ stand for closeness between truth values, and $\val{Pa}$ for the truth value of $Pa$:

\ex. If $a\sim_{P} b$, then $\val{Pa}\sim_{T} \val{Pb}$\qquad (Closeness)

Closeness is the leading principle behind the introduction of degrees of truth in Smith's approach. 
Two degrees of truth seem obviously inadequate to accommodate closeness, so more than two truth values are required. 
But how many more, if closeness is to be secured?
Smith's answer is as follows:

\begin{quote}

I do not know exactly how many degrees of truth we need in order to accommodate Closeness. The point is simply that we need a significant number of them. [\ldots] As far as accommodating vagueness goes, we might have a large finite number of degrees of truth [\ldots] or we might have continuum-many degrees of truth (as in the fuzzy picture) (p.\ 190).

\end{quote} 

Smith's choice is to have continuum many degrees of truth, though Smith admits that there are no conclusive reasons against having a finite number of degrees of truth.\footnote{In particular, Smith rejects arguments based on the seeming arbitrariness of any particular finite number; see p.\ 190.
} 
An option Smith explicitly rejects, however, is working with only three degrees of truth \cite[p.\ 186]{smith:vdt}, \cite[p.\ 178]{smith:vac}. 
According to Smith, a third-value view will necessarily suffer from the jolt problem. \addPE{Smith's reason to reject trivalent approaches is as follows (p. 186):

\begin{quote}
If one sentence is True and another False, then they are as far apart as can
be in respect of truth --- and furthermore, they are in an absolute sense \emph{very
far apart} in respect of truth. Given that Truth and Falsity are poles apart
in this way, no third truth status can be \emph{very close} to \emph{both} of them.
\end{quote}

Smith's argument is meant to be fully general, that is, it ought not to depend on the interpretation of the third truth value. What about the case, however, in which the third truth value is interpreted as ``Both-True-and-False''? (For approaches to vagueness that take this route, see \cite{cobreros:vs, priest:vi, ripley:sos, wrphc:tg}.) Isn't this third truth value close to ``True'' and close to ``False''? 

Take two $P$-similar objects $a$ and $b$ such that $Pa$ gets value 1, and $Pb$ the value \half: since this means that $Pa$ is True, and that $Pb$ is True and False, there no longer is any jolt in this case, since the value assigned to $Pb$ retains some element of the value assigned to $Pa$. 
These claims match in an important respect; they are both true.
The same holds if $Pc$ is assigned \half\ and $Pd$ is assigned 0. 
These claims match in an important respect; they are both false.
The only jolt would be a situation in which $a$ and $b$ are $P$-similar, but such that $Pa$ gets 1 and $Pb$ 0---and this is exactly what does not arise, on such an approach.
So, arguably, a glut-based theory of vagueness does not predict any jolts.\footnote{{See \cite{egre:vdtreview} for a preliminary version of this objection. Here, however, we focus on Smith's topological argument against three values. We thank an anonymous for urging us to do so.}}


\addEP{This argument would not suffice to convince Smith, however. For Smith's point is that regardless of its dialetheist definition,} a value such as Both-True-and-False cannot be \emph{very close} both to True and to False. Thus, Smith writes (p. 186):

\begin{quote}

For if
one thing is \textit{very} similar to each of two other things in some respect, then
those two things must at the very least be \textit{reasonably} similar to one another
in that respect --- yet Truth and Falsity are not similar at all in respect of
truth. Thus, to the extent that a sentence is very close to True, it is not very
close to False, and vice versa.

\end{quote}

Our response here is that True and False can actually be taken to be ``reasonably similar'', though we need to be careful about the relevant respect. While Smith writes that they are ``not similar at all in respect of \emph{truth}'', we can observe that they are similar in respect of being \emph{truth-values}, consistently with being as far apart as can be along that dimension. Consider the following analogy: ``black'' and ``white'' are certainly not similar at all ``in respect of (their proximity to) black'', but they are similar in respect of being achromatic colors. 
And in that respect black and white are indeed reasonably similar. 
And while they are poles apart along that dimension of comparison, grey is a mixture which is as close to white and to black as can be.  Moreover, because no property other than grey can be close to both white and black while being distinct from either, grey is even \emph{very close} to both.

Admittedly, a central grey, say of RGB value (128, 128, 128), may be seen as relatively distant from a central white (255, 255, 255) and from a central black (0, 0, 0). But in our analogy we need not equate ``grey'' with a specific triple of RGB values. Consider the region of triples of form $(x,x,x)$ with $0<x<255$. This is the grey region (including dark and light greys), and it is very similar to the (just) white region, and very similar to the (just) black region. Structurally, our point is that ``Both-True-and-False'' may be viewed in the same way, as denoting a region of overlap.\footnote{\addPE{This analogy can be made rigorous: \cite{egre2021half} treats ``true'' and ``false'' as \emph{absolute} gradable predicates, structurally ambiguous between a \emph{total} and a \emph{partial} interpretation (in the sense of \cite{rotstein2004total}). The total interpretations denote disconnected endpoints on the scale, but the partial interpretations overlap. In the case of achromatic colors, the total interpretation of ``white'' in RGB triples is conventionally $\{(255, 255, 255)\}$, but the partial one is the upset $\{(x,x,x) | k\leq x \leq 255\}$, and dually for ``black'', its total interpretation is conventionally $\{(0, 0, 0)\}$, and the partial one the downset $\{(x,x,x) | 0\leq x\leq k'\}$: provided $k<k'$, the partial interpretations overlap.}}

Statistical theories of vagueness naturally fit with this interpretation of the third value, but this interpretation too does not give rise to jolts (see \cite {borel2014economic,egre:pas,lassiter2011vagueness,eb:borel,lassiter2017adjectival, egre2017vague}).
On such an approach, we can view 1 as applying to an item for which the response can only be of the form ``$x$ is $P$'', $\half$ as encoding an item for which responses can be either ``$x$ is $P$'' or ``$x$ is not $P$'', and 0 as encoding an item for which the response can be only of the form ``$x$ is not $P$".
(Of course, items that get assigned the value $\half$ on this picture may still be such as to support different proportions or probabilities of ``$P$''-responses over ``not $P$''-responses. 
But that is not to say that one should necessarily semanticize those different proportions into distinct truth values.)
On this interpretation, the last item that gets the value 1 in a sorites and the first that gets the value $\half$ still have no jolt between them, in that they warrant identical responses on most occasions.

What the glutty and statistical views have in common is that the value $\half$ is seen as encoding a way in which the statuses represented by 1 and 0 \emph{interact}, rather than encoding a distinct status. This makes them altogether different from an interpretation on which $\half$ encodes a distinct response from the ones encoded by 1 or 0, such as ``I don't know'' or ``Indeterminate''. These third-status interpretations predict two $P$-similar (and so, consecutive in a sorites series) items that \emph{mandate} distinct responses, instead of \textit{allowing} for identical responses.
It is only these third-status interpretations that fall victim to Smith's jolt problem. (For related discussion, see \cite{wright:quandary, wright:vfca}.) Smith's argument, then, isn't fully about the \emph{number} of values in play, but instead turns also on their \emph{interpretation}.

Consequently, while we agree with Smith that a theory of vagueness working with only two values will incur the jolt problem, our point is that three truth values can suffice to give adequate provision against it. Like Smith, we also agree that the following version of Tolerance (stated p. 160), which we call Smith-tolerance, should be rejected:

\ex. If $a\sim_{P} b$ then $\val{Pa}=\val{Pb}$ \qquad (Smith-Tolerance)

But unlike Smith, we think that the tolerance principle in the version stated in \ref{tol} can be preserved as a first-order claim in the object language of a theory of vagueness (see \cite{cervr:tcs}). In what follows, we will show that a three-valued version of Closeness is actually enough to support Tolerance as an object-language principle; since it is this object-language formulation that generates the best form of the jolt problem, we conclude that three-valued approaches need not be subject to jolts.}

\section{Smith-consequence and ST: a comparison} \label{smithst}

In this section, we present and briefly compare two logical systems: the one adopted in \cite{smith:vdt}, which we will call `Smith-consequence', and a slightly modified version of the one adopted in \cite{cervr:tcs}, which we will call `ST'. (The purpose of the modifications is twofold: for simplicity, and to ease comparison with Smith's approach.) At first, we will consider versions of these logics that completely ignore the connection between the object-language predicates $P$ and $\sim_P$. As far as we are concerned for now, these are simply distinct predicates, and do not constrain each other in any way. We will introduce connections between these predicates later, once we have described the common structure that underlies both Smith-consequence and ST.

\subsection{Smith}

Smith's approach to vagueness is based on models that assign values from the real interval $[0, 1]$ to formulas; these models assign values to compound formulas compositionally along the usual \L ukasiewicz lines {(viz. \cite{luk1920}) for the 3-valued case)}, {\em except} for the conditional. Smith leaves out \L ukasiewicz's conditional entirely, instead defining a material conditional $A \to B$ to be equivalent to $\neg A \vee B$. In sum, then, Smith's models are pairs $\tuple{D, I}$ of a domain and an interpretation, where:
\begin{itemize}
\item For a term $t$, $I(t) \in D$
\item For an $n$-ary predicate $P$, $I(P) \in [0, 1]^{(D^n)}$
\item For an atomic sentence $A = P(t_1, \ldots, t_n), I(A) = I(P)(I(t_1), \ldots, I(t_n))$
\item $I(\neg A) = 1 - I(A)$
\item $I(A \wedge B) = \min(I(A), I(B))$
\item $I(A \vee B) = \max(I(A), I(B))$
\item $I(A \to B) = \max(1 - I(A), I(B))$
\item $I(\forall x A(x)) = \glb \{I'(A(x)) : I' \text{ is an $x$-variant of } I\}$
\item $I(\exists x A(x)) = \lub \{I'(A(x)) : I' \text{ is an $x$-variant of } I\}$
\end{itemize}

These are the models we will work with for the remainder of the paper; we will simply call them {\em models}. (When it is convenient, for a model $M = \tuple{D, I}$ we will sometimes write $M(A)$ instead of $I(A)$ for the value of a formula $A$.) 

An \emph{argument}, for our purposes here, is something of the form $\sqq{\G}{\Del}$, where $\G$ and $\Del$ are sets of formulas; this should be thought of as the argument with premises $\G$ and conclusions $\Del$.
A \emph{consequence relation} is a set $\vDash$ of arguments; we will write $\SQQ{\G}{\Del}$ for the claim that $\sqq{\G}{\Del} \in\; \vDash$.
To specify a consequence relation $\vDash$ model-theoretically, we simply specify a relation---the {\em countermodel} relation---between models and arguments; we then say that $\G \vDash \Del$ iff there is no model $M$ such that $M$ is a countermodel to $\sqq{\G}{\Del}$.

Although the models we work with are relatively familiar, Smith's approach to consequence is not a usual one for models like these.

\begin{defn} \label{smithdefn}
A model $\tuple{D, I}$ is a {\em Smith countermodel} to an argument $\sqq{\G}{\Del}$ iff: 
\begin{itemize}
\item $I(\gamma) > .5$ for every $\gamma \in \G$ and 
\item $I(\delta) < .5$ for every $\delta \in \Del$.
\end{itemize}
If there is no Smith countermodel to an argument $\sqq{\G}{\Del}$, then the argument is {\em Smith valid}, written $\G \Sc \Del$.\footnote{Smith considers single-conclusion arguments only; this is the most natural generalization of his approach to a multiple-conclusion setting, which helps to bring out the symmetry in the definition.}
\end{defn}

This approach effectively treats the models as having {\em three} inferential statuses to assign to formulas. A model can assign a formula a value strictly greater than .5, suitable to be the value of a premise of an argument in a countermodel to that argument; or a value strictly less than .5, suitable to be the value of a conclusion in a countermodel; or the value .5 itself, not suitable to be the value of a premise or a conclusion in a countermodel. 

Unlike usual designated-value or order-theoretic understandings of consequence on these models, understandings like Smith's that divide the values into {\em three} chunks, and require a countermodel to an argument to map its premises into one particular chunk and its conclusions to another, are not guaranteed to be {\em transitive} in any sense. Indeed, this is just the strategy exploited to produce nontransitivity in \cite{frankowski:fpi, zardini:mot, cervr:tcs}.\footnote{See also \cite{paris2009inconsistency} for a definition of consequence relative to two thresholds in a probabilistic setting, to deal with inconsistent beliefs. The resulting consequence is also called ``parameterized'' there; our terminology below was introduced independently.}

In fact, there is a sense in which \emph{every} consequence relation of a certain sort has a three-valued presentation: all that is required is that the consequence relation be \emph{monotonic} (such that adding premises or conclusions can never make an invalid argument out of a valid one) and \emph{reflexive} (such that every singleton set is a valid consequence of itself).
Nothing like transitivity is required.\footnote{\addEP{That \emph{four} values suffice for every \emph{monotonic} consequence relation, whether or not it is reflexive, is implicit in Proposition 2 of \cite[p.\ 407]{humberstone:hl}; the move to three values to impose reflexivity is given (again, implicitly) later on the same page. See also \cite{malinowski2004strawsonian, french2019valuations, blasio2017inferentially, chemla2019suszko} for different presentations of this fact. The representability of a monotonic logic by means of four values is connected to \emph{Suszko's Thesis} (\cite{suszko1977fregean}), which draws on a similar fact, imposing reflexivity and a strong form of transitivity to reduce the number of needed values to two. When only one of those two conditions is imposed, the number can be reduced to three (see \cite{malinowski:cilmv} about dropping reflexivity, \cite{frankowski:fpi} about dropping transitivity, and \cite{tsuji:suszko,blasio2017inferentially, french2019valuations, chemla2019suszko} for general results). 
Importantly, the fact that a consequence relation has \emph{some} three-valued presentation does nothing to show that it has a \emph{well-behaved} three-valued presentation.
In particular, there is no requirement of compositionality; it might be that the value of a compound sentence floats completely free of the values of its components.
In what follows, we {do} pursue some reductions to three-valued presentations, but these all maintain compositionality; the reductions we use are all truth-functional in the sense of \cite{chemla2019suszko}.}}

As it happens, despite the presence of three inferential statuses, the possibility of nontransitivity is not realized in \cite{smith:vdt}; Smith-consequence \emph{is} transitive. In fact, it is precisely the usual consequence relation of classical logic (\cite[p.\ 222]{smith:vdt}; this will also follow from our Theorem \ref{paraclassical} below).


\subsection{ST}

The strict-tolerant approach to vagueness was first presented in \cite{cervr:tcs} as a(n implicitly) four-valued system.\footnote{In \cite{cervr:tcs}, there are three different satisfaction relations between models and formulas: tolerant, classical, and strict.
Each is implied by the one(s) that follow it.
As a result, there are four statuses a formula can have on a model: it can be strictly satisfied, classically but not strictly satisfied, tolerantly but not classically satisfied, or not satisfied at all.
The status of each compound sentence is determined by the statuses of its components.
This is the sense in which the system is four-valued.}
Here, we give a simpler three-valued formulation, following \cite{cervr:pive}.

\begin{defn} \label{stdefn}
A model $\tuple{D, I}$ is an {\em ST countermodel} to an argument $\sqq{\G}{\Del}$ iff:
\begin{itemize}
\item $I(A) \in \{0, \half, 1\}$ for every formula $A$; 
\item $I(\gamma) = 1$ for every $\gamma \in \G$; and
\item $I(\delta) = 0$ for every $\delta \in \Del$. 
\end{itemize}
If there is no ST countermodel to an argument $\sqq{\G}{\Del}$, then the argument is {\em ST valid}, written $\G \STc \Del$.
\end{defn}

Note that the first clause of this definition amounts to throwing out all models that use any values other than those in $\{0, .5, 1\}$. (If such models cannot be countermodels, then there is no reason to attend to them at all when evaluating an argument for ST validity.) The remaining three-valued models are a familiar sort: they are {\em strong Kleene} models (see e.g.\ \cite{bf:pp, priest:intro}). To present ST in its own right, strong Kleene models are the simplest (model-theoretic) tool, but here we work within the broader space of models, to preserve convenient links with other consequence relations.

Note that every ST countermodel to an argument is also a Smith countermodel to that argument; it follows immediately that $\G \Sc \Del$ implies $\G \STc \Del$. As it happens, the converse holds as well; both consequence relations are exactly the familiar consequence relation of first-order classical logic. (Again, this will follow from Theorem \ref{paraclassical}.)

\section{Parameterised consequence} \label{paracons}

In this section, we turn to a broad family of consequence relations, of which Smith-consequence and ST-consequence are two instances. A third instance will also be helpful:

\begin{defn} \label{classdefn}
A model $\tuple{D, I}$ is a {\em classical countermodel} to an argument $\sqq{\G}{\Del}$ iff:
\begin{itemize}
\item $I(A) \in \{0, 1\}$ for every formula $A$;
\item $I(\gamma) = 1$ for every $\gamma \in \G$; and
\item $I(\delta) = 0$ for every $\delta \in \Del$.
\end{itemize}
If there is no classical countermodel to an argument $\sqq{\G}{\Del}$, then the argument is {\em classically valid}, written $\G \Cc \Del$.
\end{defn}

Inspection of our models reveals that this is just the usual notion of a classical countermodel, and so the usual notion of (first-order) classical validity.

To move to the general case, we need the notion of a set of values being {\em closed}:

\begin{defn}
A set $V \subseteq [0, 1]$ is {\em closed} iff it is closed under greatest lower bound, least upper bound, and the function $\neg(x) = 1-x$.
\end{defn}

The import of this definition is contained in the following fact:
\begin{fact} \label{closedfact}
If $V \subseteq [0, 1]$ is closed, then for any model $\tuple{D, I}$: if for all $n$, for every $n$-ary predicate $P$, $I(P) \in V^{(D^n)}$, then for all formulas $A$, $I(A) \in V$.
\end{fact}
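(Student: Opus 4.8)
The plan is to prove Fact~\ref{closedfact} by a straightforward structural induction on the formula $A$, using the closure hypotheses on $V$ to handle each connective and quantifier in turn. The statement to be shown is that if $I(P) \in V^{(D^n)}$ for every predicate $P$ (so every atomic value lands in $V$), then $I(A) \in V$ for all $A$. Since the hypothesis constrains the interpretation of \emph{all} predicates uniformly, and since the clause for quantifiers ranges over $x$-variants $I'$ of $I$ (which agree with $I$ on all predicate interpretations, only shifting the value of a variable), the induction hypothesis will apply equally to every such variant $I'$; this is the key observation that lets the quantifier cases go through.

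The base case is immediate: for an atomic sentence $A = P(t_1, \ldots, t_n)$, we have $I(A) = I(P)(I(t_1), \ldots, I(t_n))$, which lies in $V$ precisely by the hypothesis $I(P) \in V^{(D^n)}$. For the inductive step I would run through the clauses of the model definition one by one. Negation uses closure under $\neg(x) = 1-x$: if $I(A) \in V$ then $I(\neg A) = 1 - I(A) \in V$. Conjunction and disjunction use closure under greatest lower bound and least upper bound respectively, since $\min$ and $\max$ of two elements of $V$ are just their binary $\glb$ and $\lub$, which remain in $V$. The conditional $I(A \to B) = \max(1 - I(A), I(B))$ is handled by combining the negation and disjunction cases: $1 - I(A) \in V$ by closure under $\neg$, and then the $\max$ with $I(B) \in V$ stays in $V$ by closure under $\lub$. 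For the quantifiers, $I(\forall x A(x)) = \glb\{I'(A(x)) : I' \text{ an } x\text{-variant of } I\}$; each $I'(A(x))$ is in $V$ by the induction hypothesis (applied to $A(x)$ under the variant interpretation $I'$, which still satisfies the predicate hypothesis), so the whole set sits inside $V$, and its $\glb$ is back in $V$ by closure. The existential case is identical with $\lub$ in place of $\glb$.

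The one point requiring a little care---and the closest thing to an obstacle---is that closure of $V$ under binary $\glb$ and $\lub$ must be leveraged to give closure under the \emph{arbitrary} (possibly infinite) greatest lower bounds and least upper bounds that appear in the quantifier clauses, since the domain $D$ need not be finite. Here I would rely on the definition's phrasing ``closed under greatest lower bound, least upper bound'' being read as closure under arbitrary such bounds taken within $[0,1]$; because $[0,1]$ is a complete lattice, each set $\{I'(A(x))\}$ already has a $\glb$ and $\lub$ in $[0,1]$, and the requirement is that these land in $V$ whenever the set is a subset of $V$. Under that reading the quantifier cases are immediate. If instead one only assumes closure under finite bounds, the infinite case would genuinely need the stronger hypothesis, so I would flag that the intended notion of ``closed'' is closure under arbitrary bounds. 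With that understood, the induction closes and the fact follows.
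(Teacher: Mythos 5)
Your proof is correct and takes exactly the same route as the paper's: a structural induction on $A$, using closure under $\neg$, $\glb$, and $\lub$, with binary $\min$/$\max$ treated as special cases of $\glb$/$\lub$. Your reading of ``closed'' as closure under \emph{arbitrary} (set) bounds is indeed the paper's intended one, as its later remark about finite subsets being closed under bounds ``of sets'' confirms.
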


\begin{proof}
Induction on $A$'s formation. (Note that binary minimum and maximum are special cases of greatest lower bound and least upper bound, respectively.)
\end{proof}

That is, for closed $V$, if a model assigns predicate values built only from values in $V$, then the entire model will assign values only from $V$. This notion is convenient for identifying usable selections from the value space $[0, 1]$. Note that every finite subset of $[0, 1]$ that is closed under the function $\neg(x) = 1 -x$ is closed, since every finite subset is closed under greatest lower bound and least upper bound {(of sets)}. (In finite cases, these simply amount to minimum and maximum, respectively.) In particular, both $\{0, \half, 1\}$ and $\{0, 1\}$ are closed.

We consider a parameterised notion of countermodel. Our parameters have three coordinates: a set $V$ of values; a set $T$ of values for premises to take in a countermodel; and a set $F$ of values for conclusions to take in a countermodel. Given these parameters, a countermodel is a model that takes values from $V$, maps all premises into $T$, and maps all conclusions into $F$. We will not consider all possible ways of doing this, but we will still consider quite a wide range.

\begin{defn}
A set $X \subseteq [0, 1]$ is an {\em upset} iff whenever $x \in X$ and $x < y \leq 1$, then $y \in X$; it is a {\em downset} iff whenever $x \in X$ and $x > y \geq 0$, then $y \in X$.
\end{defn}

\begin{defn}
A {\em parameter} is a triple $\tuple{V, T, F}$ such that: 
\begin{itemize}
\item $\{0, 1\} \subseteq V \subseteq [0, 1]$ is closed;
\item $1 \in T \subseteq (.5, 1]$ is an upset; and
\item $0 \in F \subseteq [0, .5)$ is a downset.
\end{itemize}
\end{defn}

\begin{defn} \label{paramdefn}
Given a parameter $\para = \tuple{V, T, F}$, a model $\tuple{D, I}$ is a {\em $\para$ countermodel} to an argument $\sqq{\G}{\Del}$ iff:
\begin{itemize}
\item $I(A) \in V$ for every formula $A$;
\item $I(\gamma) \in T$ for every $\gamma \in \G$; and
\item $I(\delta) \in F$ for every $\delta \in \Del$.
\end{itemize}
If there is no $\para$ countermodel to an argument $\sqq{\G}{\Del}$, then the argument is {\em $\para$ valid}, written $\G \pcp \Del$.
\end{defn}

Note that all three of our examples fit this mould: for Smith-consequence, the parameter is $\tuple{[0, 1], (.5, 1], [0, .5)}$; for ST-consequence, $\tuple{\{0, \half, 1\}, \{1\}, \{0\}}$; and for classical consequence, $\tuple{\{0, 1\}, \{1\}, \{0\}}$. By letting S, ST, and C simply {\em be} these parameters, we can see Definitions \ref{smithdefn}, \ref{stdefn} and \ref{classdefn} all as special cases of Definition \ref{paramdefn}.{\footnote{{Our definition of a parameter rules out the overlap between $T$ and $F$. By allowing overlap, and taking $T\subseteq (0,1]$, and $F\subseteq [0,1)$, we could retrieve another notion of entailment explored in the literature, namely TS entailment (see \cite{cervr:tcs}), also known as Q-consequence (\cite{malinowski1990q, malinowski2009beyond}), then expressible as $\tuple{\{0, \half, 1\}, \{\half, 1\}, \{0, \half\}}$. We leave an exploration of TS entailment for another occasion.}}}

We proceed to characterize $\para$ validity for an arbitrary parameter, by way of a definition and a pair of lemmas.

\begin{defn} \label{crispdefn}
A model $M' = \tuple{D, I'}$ {\em crispifies} a model $M = \tuple{D, I}$ (written $M \iscrisped M'$) iff for all $n$, for all $n$-ary predicates $P$:
\begin{itemize}
\item $I'(P) \in \{0, 1\}^{(D^n)}$;
\item if $I(P)(\tuple{d_1, \ldots, d_n}) > .5$, then $I'(P)(\tuple{d_1, \ldots, d_n}) = 1$; and
\item if $I(P)(\tuple{d_1, \ldots, d_n}) < .5$, then $I'(P)(\tuple{d_1, \ldots, d_n}) = 0$.
\end{itemize}
\end{defn}

Note that when $M \iscrisped M'$, then $M'$ assigns predicates only values from $\{0, 1\}$; since this set is closed, we already know that $M'$ thus assigns every formula some value in $\{0, 1\}$. But something more interesting is happening here as well:

\begin{lem} \label{crispfact}
If $M \iscrisped M'$, then for every formula $A$, if $M(A) > .5$ then $M'(A) = 1$, and if $M(A) < .5$ then $M'(A) = 0$.
\end{lem}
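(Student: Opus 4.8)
The plan is to prove the claim by induction on the construction of $A$, establishing both halves (the ``$M(A) > .5 \Rightarrow M'(A)=1$'' half and the ``$M(A) < .5 \Rightarrow M'(A)=0$'' half) simultaneously, and stating the inductive hypothesis universally over all pairs $M \iscrisped M'$, so that it remains available for subformulas evaluated under $x$-variants. Before starting I would record two simplifying observations. First, since crispification forces $I'$ to take predicate values in $\{0,1\}$, and $\{0,1\}$ is closed, Fact \ref{closedfact} already guarantees $M'(A) \in \{0,1\}$ for every $A$; hence the two conclusions are exhaustive and mutually exclusive, and it suffices to pin down which of $0$ and $1$ the value $M'(A)$ takes. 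Second, crispification alters only predicate extensions, so $M$ and $M'$ (and any corresponding $x$-variants) assign the same denotations to all terms.

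For the base case, $A = P(t_1,\ldots,t_n)$ atomic: writing $\vec d = \tuple{I(t_1),\ldots,I(t_n)}$, we have $M(A) = I(P)(\vec d)$ and $M'(A) = I'(P)(\vec d)$, and the last two clauses of Definition \ref{crispdefn} are exactly the two implications we need. The connective cases are then routine, because each clause is a monotone operation respecting the threshold $.5$. For $\neg B$, use $M(\neg B) = 1 - M(B)$: we have $M(\neg B) > .5$ iff $M(B) < .5$, so the IH gives $M'(B)=0$ and hence $M'(\neg B)=1$, and dually. For $B \wedge C$, note that $\min(M(B),M(C)) > .5$ forces both conjuncts above $.5$ (so both $M'$-values are $1$, and the min is $1$), while $\min(M(B),M(C)) < .5$ forces some conjunct below $.5$ (so some $M'$-value is $0$, and the min is $0$); the case $B \vee C$ is dual. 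The conditional $B \to C$ is handled exactly like $\neg B \vee C$, since $M(B \to C) = \max(1 - M(B), M(C))$.

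The quantifier step is the main obstacle: here $\min$ and $\max$ are replaced by $\glb$ and $\lub$ over the $x$-variants, and these bounds need not be attained, so the threshold reasoning must be carried out carefully. Writing $M^e$ for the $x$-variant sending $x$ to $e \in D$, I would first observe that $M^e \iscrisped (M')^e$ for every $e$ (passing to a variant leaves predicate extensions untouched), so the IH applies to $B$ under each variant. For $\forall x B$, with $M(\forall x B) = \glb_{e} M^e(B)$: if this value is $> .5$, then since every variant value is at least the glb, every $M^e(B) > .5$, so every $(M')^e(B)=1$ by the IH and the glb of these is $1$; if it is $< .5$, then $.5$ cannot be a lower bound of the variant values, so some $M^{e_0}(B) < .5$, giving $(M')^{e_0}(B)=0$ by the IH and forcing the glb to $0$. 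The case $\exists x B$ is dual, using that $\lub > .5$ implies some variant exceeds $.5$ while $\lub < .5$ implies every variant falls below $.5$.

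The only real delicacy is precisely this asymmetry between the two directions: $\glb > .5$ yields information about \emph{all} variants, whereas $\glb < .5$ yields information about only \emph{some} variant (and dually for $\lub$). An incautious argument could try to conclude ``some variant $> .5$'' from $\glb > .5$, which fails. The boundary value $.5$ is simply not covered by either implication, consistently with the statement's silence there, so no separate boundary analysis is needed.
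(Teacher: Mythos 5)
Your proof is correct and takes essentially the same approach as the paper: the paper's entire proof of Lemma~\ref{crispfact} is ``Induction on $A$'s formation,'' and your argument is precisely that induction spelled out, including the genuinely delicate quantifier cases where $\glb$/$\lub$ need not be attained. The one assumption you make explicit---that crispification leaves term denotations unchanged---is indeed the intended (though unstated) reading of Definition~\ref{crispdefn}, without which the lemma would fail already at the atomic case.
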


\begin{proof}
Induction on $A$'s formation.
\end{proof}

\begin{lem} \label{hascrisp}
Every model can be crispified. That is, for any $M$, there is some $M'$ such that $M \iscrisped M'$.
\end{lem}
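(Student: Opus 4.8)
The plan is to construct the required $M'$ directly, by ``rounding'' $M$'s predicate values away from $\half$. First I would keep the domain and the interpretation of terms fixed, setting $I'(t) = I(t)$ for every term $t$, so that $M'$ shares $M$'s first-order skeleton. Then, for each $n$-ary predicate $P$ and each tuple $\tuple{d_1, \ldots, d_n} \in D^n$, I would define $I'(P)(\tuple{d_1, \ldots, d_n})$ by a threshold rule: set it to $1$ when $I(P)(\tuple{d_1, \ldots, d_n}) > \half$, set it to $0$ when $I(P)(\tuple{d_1, \ldots, d_n}) < \half$, and set it to $0$ (the choice is immaterial, provided it lies in $\{0,1\}$) when $I(P)(\tuple{d_1, \ldots, d_n}) = \half$.

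Next I would check that $M' = \tuple{D, I'}$ is a legitimate model in the sense fixed earlier. This is immediate: the interpretation of terms lands in $D$ by construction, and each $I'(P)$ takes values in $\{0,1\} \subseteq [0,1]$, so $I'(P) \in [0,1]^{(D^n)}$ as required. Finally, verifying $M \iscrisped M'$ amounts to reading off the three clauses of Definition \ref{crispdefn}: the first holds because $I'(P) \in \{0,1\}^{(D^n)}$ by construction, and the two conditional clauses hold because the threshold rule sends values above $\half$ to $1$ and values below $\half$ to $0$.

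There is essentially no obstacle here; the only point that warrants a remark is the treatment of predicate values equal to $\half$, on which Definition \ref{crispdefn} imposes no constraint beyond membership in $\{0,1\}$. Because a fixed deterministic choice (say, always $0$) suffices at this boundary, the construction requires no appeal to any choice principle, and the existence claim follows. The substantive content linking $M$ and $M'$ at the level of compound formulas has already been discharged in Lemma \ref{crispfact}; that is precisely why this lemma can afford to be no more than an explicit construction.
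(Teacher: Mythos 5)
Your proof is correct and follows essentially the same route as the paper's: the definition of crispification already dictates the values for predicate entries differing from $\half$, and any fixed choice in $\{0,1\}$ (the paper notes either $1$ or $0$ works) handles the entries equal to $\half$. Your additional checks—that $M'$ is a legitimate model, that term denotations are kept fixed, and that no choice principle is needed—merely make explicit what the paper leaves implicit.
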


\begin{proof}
Definition \ref{crispdefn} already tells us how to crispify each case where $I(P)$ assigns a value different from .5; for cases where $I(P)$ assigns exactly .5, either 1 or 0 will do.
\end{proof}

We are now ready to characterize $\para$ validity:

\begin{thm} \label{paraclassical}
For any parameter $\para$, $\G \pcp \Del$ iff $\G \Cc \Del$.
\end{thm}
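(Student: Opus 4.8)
The plan is to prove the biconditional at the level of countermodels: I will show that there is a $\para$ countermodel to $\sqq{\G}{\Del}$ if and only if there is a classical countermodel to it. Since both $\para$ validity and classical validity are defined as the \emph{absence} of the respective countermodel, this equivalence of countermodel-existence immediately yields $\G \pcp \Del$ iff $\G \Cc \Del$. So it suffices to establish the two containments between the classes of countermodels.

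One direction I expect to be immediate from the definition of a parameter. Suppose $M$ is a classical countermodel to $\sqq{\G}{\Del}$: it assigns every formula a value in $\{0, 1\}$, every premise the value $1$, and every conclusion the value $0$. Because every parameter $\tuple{V, T, F}$ satisfies $\{0, 1\} \subseteq V$, $1 \in T$, and $0 \in F$, the very same model $M$ already meets all three clauses of Definition \ref{paramdefn} and so is a $\para$ countermodel. Hence the existence of a classical countermodel gives a $\para$ countermodel, which is the contrapositive of $\G \pcp \Del \Rightarrow \G \Cc \Del$.

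For the converse I would use the crispification machinery. Suppose $M$ is a $\para$ countermodel. The key observation is that the parameter constraints $T \subseteq (.5, 1]$ and $F \subseteq [0, .5)$ force $M(\gamma) > .5$ for every $\gamma \in \G$ and $M(\delta) < .5$ for every $\delta \in \Del$ — precisely the hypothesis the crispification lemmas are built to exploit. By Lemma \ref{hascrisp} I pick $M'$ with $M \iscrisped M'$. By Lemma \ref{crispfact}, $M(\gamma) > .5$ yields $M'(\gamma) = 1$ for each premise and $M(\delta) < .5$ yields $M'(\delta) = 0$ for each conclusion. Since $M'$ assigns predicates only values in $\{0, 1\}$, and $\{0, 1\}$ is closed, Fact \ref{closedfact} guarantees $M'(A) \in \{0, 1\}$ for every formula $A$. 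Thus $M'$ is a classical countermodel, giving the contrapositive of $\G \Cc \Del \Rightarrow \G \pcp \Del$.

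The proof is short because Lemmas \ref{crispfact} and \ref{hascrisp} do the real work. I do not anticipate a serious obstacle; the one point meriting care is recognizing that the genuine content of a parameter — that $T$ sits strictly above $.5$ and $F$ strictly below it — is exactly what licenses the appeal to Lemma \ref{crispfact}, independently of how large or small $V$, $T$, and $F$ otherwise are. The single crispified model $M'$ then serves as a classical countermodel uniformly across all parameters, which is what makes the characterization parameter-independent.
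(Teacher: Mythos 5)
Your proposal is correct and takes essentially the same route as the paper's own proof: both argue contrapositively in each direction, observing that a classical countermodel is automatically a $\para$ countermodel (since $\{0,1\} \subseteq V$, $1 \in T$, $0 \in F$), and that a $\para$ countermodel, whose premises must exceed $.5$ and conclusions fall below $.5$, crispifies via Lemmas \ref{hascrisp} and \ref{crispfact} into a classical countermodel. Your explicit appeal to Fact \ref{closedfact} to get $M'(A) \in \{0,1\}$ for all formulas is a detail the paper establishes in the remark following Definition \ref{crispdefn} rather than inside the proof itself, but the argument is the same.
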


\begin{proof}
For each direction, we show the contrapositive. 

LTR: suppose $\G \not\Cc \Del$. Then there is some classical countermodel $M$ to $\sqq{\G}{\Del}$, some $M$ such that $M(A) \in \{0, 1\}$ for every formula $A$, $M(\gamma) = 1$ for every $\gamma \in \G$ and $M(\delta) = 0$ for every $\delta \in \Del$. But since $\para = \tuple{V, T, F}$ is a parameter, this gives $M(A) \in V$ for every formula $A$ (since $\{0, 1\} \subseteq V$), $M(\gamma) \in T$ for every $\gamma \in \G$ (since $1 \in T$), and $M(\delta) \in F$ for every $\delta \in \Del$ (since $0 \in F$). Thus, $M$ is a $\para$ countermodel to $\sqq{\G}{\Del}$, and so $\G \not\pcp \Del$.

RTL: suppose $\G \not\pcp \Del$. Then there is some $\para$ countermodel $M$ to $\sqq{\G}{\Del}$. This requires that $M(\gamma) \in T$ for every $\gamma \in \G$ and $M(\delta) \in F$ for every $\delta \in \Del$. Since $\para$ is a parameter, this gives $M(\gamma) > .5$ for every $\gamma \in \G$ and $M(\delta) < .5$ for every $\delta \in \Del$. By Lemma \ref{hascrisp}, there is some model $M'$ such that $M \iscrisped M'$. By Lemma \ref{crispfact}, $M'(\gamma) = 1$ for every $\gamma \in \G$ and $M'(\delta) = 0$ for every $\delta \in \Del$. Since $M'(A) \in \{0, 1\}$ for every formula $A$, $M'$ is a classical countermodel to $\sqq{\G}{\Del}$, and so $\G \not\Cc \Del$.
\end{proof}

From this perspective, it is no surprise that Smith-consequence and ST-consequence both turn out to be exactly classical logic; these are just two pinpricks of light shed on the broader phenomenon here, which is that {\em every} parameterised consequence relation is exactly classical logic.

This immediately yields an $n$-valued presentation of classical logic for every $n \geq 2$. 
Let $V_n = \{0, 1/(n - 1), \ldots , (n-2)/(n - 1), 1\}$. 
Then $V_n$ has $n$ members, and $\para = \tuple{V_n, \{1\}, \{0\}}$ is a parameter. It also reveals that Smith's choice of the parameter $\tuple{[0, 1], (.5, 1], [0, .5)}$ is logically arbitrary, even given his choice of the value space $[0, 1]$.
Many choices for the parameter's last two coordinates would have yielded the same consequence relation.

Smith offers more motivation for his choice than simply the consequence relation it yields.
He understands a sentence with value $\geq \half$ as `assertion grade' (fit to assert) and a sentence with value $> \half$ as `inference grade' (fit to infer from); the idea is that a valid argument whose premises are all inference grade must have some conclusion that is assertion grade.
Why these particular values?
\begin{quote}
The advantage of my proposal\ldots is that it is \emph{minimal}. 
If a sentence $S$ is at least 0.5 true, then one cannot make a truer statement by asserting the negation of $S$ than by asserting $S$.
What more than this could be required for a statement to be `assertion grade'\ldots?
Any higher standard would need further justification, and I cannot see what such justification would consist in.
Now, given that we have set the cut-off for assertion grade statements at 0.5, and want to make the cut-off for inference grade statements strictly \emph{higher} than this, the \emph{minimal} cut-off for inference grade statements will be the one I have proposed: they must be \emph{more than} 0.5 true.
Again, any higher standard would need further justification, and I cannot see what such justification would consist in. \cite[p.\ 224, emphases in original]{smith:vdt}
\end{quote}

We don't see, however, that minimality in this sense is any advantage to a proposal at all.
Exactly what benefit is having smaller numbers supposed to confer on a theory?
Smith offers no answer.\footnote{The above-quoted passage is the full discussion of the issue in \cite{smith:vdt}, except for (p.\ 250, fn.\ 57, emphasis in original): 
`[Earlier], I said that a sentence is `assertion grade'\ldots if its degree of truth is greater than or equal to 0.5.
This does \emph{not} mean that if a sentence $S$ has a degree of truth of 0.5 or greater, then an \ldots assertion of $S$ is acceptable.
Rather, the idea is that a sentence is `assertion grade'\ldots if the level of confidence appropriate in an utterance of the sentence is at least as high as the level of confidence appropriate in an utterance of its negation.'
We set aside further discussion of how exactly to understand `assertion grade' and `inference grade' on Smith's account.
}

We conclude, then, that for capturing the logical behaviour common to ST and to Smith's approach before we take account of similarity predicates, any parameter will do as well as any other.
All yield the same logic (ordinary classical logic), and we see no other potential reason to choose between them (so long as the values are interpreted in a way that circumvents the jolt problem, as discussed in \S\ref{jolt}).

\subsection{Tolerant logics} \label{tolerance}

Here, we move on to consider the logic of $P$-similarity, registering the connections between $\sim_P$ and $P$. For tolerant logics, we impose a connection between the values assigned to $t \sim_P u$, to $Pt$, and to $Pu$, for every predicate $P$ and terms $t, u$. It is perhaps not immediately apparent how to understand these connections with regard to Smith-consequence, so we pursue an indirect approach. First we consider the situation as developed in \cite{cervr:tcs, cervr:pive}. Then we turn to the general case, first looking at parameterised consequence relations in their full generality, and then narrowing in on a particular class of them; ST-consequence will be seen as a member of this wider class.

We begin by narrowing our space of models to ensure that $\sim$ relations are one and all reflexive and symmetric, in the following sense: for all predicates $P$ and all terms $t, u$: $I(t \sim_P t) = 1$ and $I(t \sim_P u) = I(u \sim_P t)$. From here forward, we ignore models that do not obey these restrictions.\footnote{This of course
already results in more valid arguments than we already had.
Look ahead to Figure \ref{stsimcalc}. By removing the rule Tol from the
sequent calculus there, you arrive at a calculus sound and
complete for every $\para$ consequence relation obeying these $\sim$
restrictions. (Yes, they are all the same; the argument is
the same as for Theorem \ref{paraclassical}, mutatis mutandis).}


\subsubsection{\STsim} \label{fullst}

\begin{defn}
A model $M$ obeys the {\em \STsim\ restriction} iff for all predicates $P$ and terms $t, u$: if $M(Pt) = 1$ and $M(Pu) = 0$, then $M(t \sim_P u) = 0$.
\end{defn}

Intuitively, the \STsim\ restriction ensures that if $t$ and $u$ are so $P$-unalike as to go all the way from 1 to 0 in their $P$-value, then they must not be at all $P$-similar.

\begin{defn} \label{sttolerant}
A model $M$ is an {\em \STsim\ countermodel} to an argument $\sqq{\G}{\Del}$ iff:
\begin{itemize}
\item $M$ obeys the \STsim\ restriction, and 
\item $M$ is an ST counterexample to $\sqq{\G}{\Del}$.
\end{itemize}
If there is no \STsim\ countermodel to an argument $\sqq{\G}{\Del}$, then the argument is {\em \STsim\ valid}, written $\G \STsc \Del$. 
\end{defn}

It will be convenient later to have a proof system for \STsim. We will work with a sequent calculus, given in Figure \ref{stsimcalc}. In the figure, $t$ and $u$ can be any terms, and $a$ must be an {\em eigenvariable}: a variable that does not occur free in the conclusion-sequent of the rule.

\begin{figure}
\vskip 3mm
\def\fCenter{\, \Yright \,}
\begin{center}
{\bf Structural rules:}
\vs
\AXC{\phantom{$A$}}
\LLl{Id}
\UIx{A}{A}
\DP
\hs
\AXx{\G}{\Del}
\LLl{K}
\UIx{\G, \G'}{\Del, \Del'}
\DP

\vs
{\bf Operational rules:}
\vs

\AXx{\G}{A, \Del}
\LLl{$\neg$L}
\UIx{\G, \neg A}{\Del}
\DP
\hs
\AXx{\G, A}{\Del}
\LLl{$\neg$R}
\UIx{\G}{\neg A, \Del}
\DP
\vs
\AXx{\G, A, B}{\Del}
\LLl{$\wedge$L}
\UIx{\G, A \wedge B}{\Del}
\DP
\hs
\AXx{\G}{A, \Del}
\AXx{\G}{B, \Del}
\LLl{$\wedge$R}
\BIx{\G}{A \wedge B, \Del}
\DP
\vs
\AXx{\G, A}{\Del}
\AXx{\G, B}{\Del}
\LLl{$\vee$L}
\BIx{\G, A \vee B}{\Del}
\DP
\hs
\AXx{\G}{A, B, \Del}
\LLl{$\vee$L}
\UIx{\G}{A \vee B, \Del}
\DP
\vs
\AXx{\G}{A, \Del}
\AXx{\G, B}{\Del}
\LLl{$\to$L}
\BIx{\G, A \to B}{\Del}
\DP
\hs
\AXx{\G, A}{B, \Del}
\LLl{$\to$R}
\UIx{\G}{A \to B, \Del}
\DP
\vs
\AXx{\G, A(t)}{\Del}
\LLl{$\forall$L}
\UIx{\G, \forall x A(x)}{\Del}
\DP
\hs
\AXx{\G}{A(a), \Del}
\LLl{$\forall$R}
\UIx{\G}{\forall x A(x), \Del}
\DP
\vs
\AXx{\G, A(a)}{\Del}
\LLl{$\exists$L}
\UIx{\G, \exists x A(x)}{\Del}
\DP
\hs
\AXx{\G}{A(t), \Del}
\LLl{$\exists$R}
\UIx{\G}{\exists x A(x), \Del}
\DP

\vs
{\bf Similarity rules:}
\vs

\AXx{\G, t \sim_P t}{\Del}
\LLl{$\sim$ref}
\UIx{\G}{\Del}
\DP
\vs
\AXx{\G, t \sim_P u}{\Del}
\LLl{$\sim$symL}
\UIx{\G, u \sim_P t}{\Del}
\DP
\hs
\AXx{\G}{t \sim_P u, \Del}
\LLl{$\sim$symR}
\UIx{\G}{u \sim_P t, \Del}
\DP
\vs
\AXx{\G}{t \sim_P u, \Del}
\LLl{Tol}
\UIx{\G, Pt}{Pu, \Del}
\DP

\end{center}

\caption{A sequent calculus for \STsim}
\label{stsimcalc}
\end{figure}

\begin{fact} \label{stsc}
The sequent calculus in Figure \ref{stsimcalc} is sound and complete for \STsim.
\end{fact}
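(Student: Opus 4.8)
The plan is to prove the two halves separately: \textbf{soundness} (every derivable sequent is \STsim\ valid) by induction on derivations, and \textbf{completeness} (every \STsim\ valid sequent is derivable) by a saturated-branch, term-model construction. Throughout, recall that the ambient models are already restricted to be reflexive and symmetric in each $\sim_P$ (from the start of \S\ref{tolerance}), and that an \STsim\ countermodel is a strong Kleene model (values in $\{0,\half,1\}$) sending premises to $1$, conclusions to $0$, and in addition obeying the \STsim\ restriction.

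For soundness I would establish, rule by rule, the contrapositive preservation property: \emph{if $M$ is an \STsim\ countermodel to the conclusion of a rule, then $M$ is an \STsim\ countermodel to at least one of its premises.} Since Id ($\sqq{A}{A}$) has no countermodel (no model gives $A$ both value $1$ and value $0$) and K only weakens the two sides, an induction on the derivation then shows every derivable sequent is valid. The propositional operational rules reduce to the clauses for $\min,\max,1-x$ (e.g.\ a countermodel to $\sqq{\G,\neg A}{\Del}$ forces $M(A)=0$; one to $\sqq{\G,A\wedge B}{\Del}$ forces $M(A)=M(B)=1$; and the branching rules locate the offending conjunct/disjunct). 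The quantifier rules use that $\{0,\half,1\}$ is closed (Fact \ref{closedfact}), so the relevant $\glb$/$\lub$ is attained at a domain element: $M(\forall x A)=0$ supplies a witness for $\forall$R, handled by reinterpreting the eigenvariable, while $M(\forall x A)=1$ gives $M(A(t))=1$ for $\forall$L via the substitution lemma. The three similarity rules are exactly where the model restrictions do their work: $\sim$ref is sound because every model is reflexive, so $M(t\sim_P t)=1$ comes for free; $\sim$symL and $\sim$symR are sound by symmetry; and Tol is sound because a countermodel to $\sqq{\G,Pt}{Pu,\Del}$ has $M(Pt)=1$ and $M(Pu)=0$, whence the \STsim\ restriction delivers $M(t\sim_P u)=0$, making $M$ a countermodel to the premise $\sqq{\G}{t\sim_P u,\Del}$.

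For completeness I would argue the contrapositive: if $\sqq{\G}{\Del}$ is not derivable, I build an \STsim\ countermodel. Run a fair, exhaustive root-first proof search (a reduction tree) from $\sqq{\G}{\Del}$, applying every rule backwards, scheduling all terms for $\forall$L/$\exists$R, fresh eigenvariables for $\forall$R/$\exists$L, all instances for $\sim$ref, and both the $\sim$sym and Tol closures. Because each rule is a genuine inference, underivability of a node forces underivability of some child (if all children were derivable the node would be too), so at each branching I may follow an underivable child; by König's lemma this yields a branch that never becomes an axiom. Let $\G^*$ and $\Del^*$ be the unions of the left and right sides along that branch; openness gives $\G^* \cap \Del^* = \emptyset$, and fairness gives the usual Hintikka closure conditions, including $t\sim_P t\in\G^*$ for all $t,P$, symmetry of membership in $\G^*$ and in $\Del^*$, and $t\sim_P u\in\Del^*$ whenever $Pt\in\G^*$ and $Pu\in\Del^*$. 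Take the term model whose domain is the set of terms, with $I$ the identity on terms and $I(P)(\vec t)$ equal to $1$, $0$, or $\half$ according as $P\vec t\in\G^*$, $P\vec t\in\Del^*$, or neither (well-defined by openness). A truth lemma, by induction on formula complexity using the Hintikka conditions and the compositional clauses, then shows $A\in\G^*\Rightarrow M(A)=1$ and $A\in\Del^*\Rightarrow M(A)=0$. Hence $M$ is a strong Kleene model sending $\G\subseteq\G^*$ to $1$ and $\Del\subseteq\Del^*$ to $0$; reflexivity and symmetry hold by the corresponding closures; and the \STsim\ restriction holds by the Tol closure (for atoms, $M(Pt)=1$ iff $Pt\in\G^*$ and $M(Pu)=0$ iff $Pu\in\Del^*$). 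So $M$ is an \STsim\ countermodel to $\sqq{\G}{\Del}$, giving $\G\not\STsc\Del$.

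The main obstacle is the completeness bookkeeping: organizing the search so that, in the limit, \emph{all} Hintikka conditions are met simultaneously --- in particular fairly interleaving the three similarity closures with the quantifier witnessing (every left-$\forall$ instantiated at every term, every right-$\forall$ given a genuine witness) while never closing the branch --- and then pushing this through the truth lemma. Handling infinite $\G,\Del$ requires only a countable enumeration (or an appeal to compactness of the finitary calculus). The soundness direction, by contrast, is entirely routine once the three similarity rules are matched against the three model restrictions.
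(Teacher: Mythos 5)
Your proposal is correct and follows essentially the same route as the paper's own proof: soundness checked rule by rule, and completeness via Takeuti's reduction-tree method of building a countermodel from an underivable sequent (as adapted to strong Kleene models in the cited work of Ripley), with the $\sim$ref, $\sim$sym, and Tol closures ensuring the resulting term model is reflexive, symmetric, and obeys the \STsim\ restriction---exactly the three conditions your saturated branch delivers. The only difference is one of detail: the paper delegates the bookkeeping to its citations, whereas you spell it out.
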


\begin{proof}
Both facts are straightforward to show in the usual ways. 
In particular, completeness can be proved following the method of \cite{takeuti:pt}, which builds a countermodel from an underivable sequent.
For the connectives and quantifiers, this method is adapted to strong Kleene models in \cite{ripley:pafc}.

The only needed addition is to show that the resulting model handles $\sim$ appropriately, but this is ensured by the $\sim$-involving rules.
In particular, $\sim$ref ensures that the resulting model assigns 1 to all formulas of the form $t \sim_P t$, the $\sim$sym rules ensure that the resulting model assigns the same value to $t \sim_P u$ as to $u \sim_P t$, and the Tol rule guarantees that the resulting model obeys the \STsim\ restriction.
\end{proof}

\STsim\ is an {\em expansion} of first-order classical logic. 
\STsim\ countermodels are all ST countermodels, so ST validity (which we know is classical validity from Theorem~\ref{paraclassical}) implies \STsim\ validity. \STsim\ also has the nice feature that it validates the principle of tolerance. This is so whether we consider instances of tolerance as {\em arguments} (since $Pa, a \sim_P b \STsc Pb$) or as {\em quantified conditionals} (since $\STsc \forall x \forall y ((Px \wedge x \sim_P y) \to Py)$). The main form of tolerance we're interested in, though, is the {\em metainferential} form, expressed by the rule Tol in Figure \ref{stsimcalc}.


\begin{defn}
A consequence relation is {\em tolerant} iff it is closed under the rule Tol.
\end{defn}

\noindent Since Tol is part of a sound and complete sequent calculus for \STsim, \STsim\ is tolerant. In this calculus, the other forms of tolerance follow from the metainferential form, via the rules Id, $\wedge$L, $\to$R, and $\forall$R. We will return to this in \S\ref{metainferences}.

There is a natural worry at this point: doesn't \STsim\ fall right into the sorites paradox? After all, quantified conditional tolerance plus classical logic is typically thought to be enough for trouble, yet \STsim\ validates these. But \STsim\ has an escape route: the structure of its countermodels allows it to be {\em nontransitive}. Indeed, \STsim\ {\em is} nontransitive, and this allows it to escape the looming trouble. We do not pursue the ups and downs of nontransitivity here; \cite{cervr:tcs, cervr:pive} offer related discussion.

\subsubsection{Parameter tolerance} \label{parameter} \label{fullgeneral}

Our concern here is to explore various avenues for restricting parameterised consequence so as to respect the connection between $P$ and $\sim_P$. We take as our paradigm the \STsim\ restriction, and generalise it to what we call {\em parameter tolerance}.

In any parameter $\para = \tuple{V, T, F}$, $T$ is some sort of positive status, and $F$ some kind of negative status. The force of $\para$ validity is to guarantee that if the premises all have the positive status, then some conclusion lacks the negative status. Parameter tolerance is parasitic on these statuses: it guarantees that if $Pt$ has the positive status while $Pu$ has the negative one, then $t \sim_P u$ must also have the negative one.

\begin{defn}
For a parameter $\para = \tuple{V, T, F}$, a model $M$ is {\em $\para$ tolerant} iff for all predicates $P$, for all terms $t, u$: if $M(Pt) \in T$ and $M(Pu) \in F$, then $M(t \sim_P u) \in F$.
\end{defn}

We can use parameter tolerance to get a new range of parameterised consequence relations:

\begin{defn} \label{paratolerant}
A model $M$ is a {\em $\para$ tolerant countermodel} to an argument $\sqq{\G}{\Del}$ iff:
\begin{itemize}
\item $M$ is $\para$ tolerant, and
\item $M$ is a $\para$ countermodel to $\sqq{\G}{\Del}$.
\end{itemize}
If there is no $\para$ tolerant countermodel to an argument $\sqq{\G}{\Del}$, then the argument is {\em $\para$ tolerant valid}, written $\G \pscp \Del$.
\end{defn}

Note that with ST understood as the parameter $\tuple{\{0, \half, 1\}, \{1\}, \{0\}}$, as before, Definition \ref{paratolerant} subsumes Definition \ref{sttolerant}. Parameter tolerance brings with it metainferential tolerance.

\begin{fact} \label{paratol}
For every parameter $\para$, $\pscp$ is tolerant.
\end{fact}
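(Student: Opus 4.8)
The plan is to prove closure under the rule Tol by contraposition, working directly at the level of models rather than building any derivation. Recall that Tol licenses the step from the premise-sequent $\sqq{\G}{t \sim_P u, \Del}$ to the conclusion-sequent $\sqq{\G, Pt}{Pu, \Del}$, so ``$\pscp$ is closed under Tol'' means: whenever $\G \pscp t \sim_P u, \Del$, we also have $\G, Pt \pscp Pu, \Del$. I would therefore assume $\G, Pt \not\pscp Pu, \Del$ and aim to produce a $\para$ tolerant countermodel witnessing $\G \not\pscp t \sim_P u, \Del$.

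First I would unpack the assumption. By Definition \ref{paratolerant}, $\G, Pt \not\pscp Pu, \Del$ yields a $\para$ tolerant countermodel $M$ to $\sqq{\G, Pt}{Pu, \Del}$: a model taking all values in $V$, with $M(\gamma) \in T$ for every $\gamma \in \G$, $M(Pt) \in T$, $M(\delta) \in F$ for every $\delta \in \Del$, and $M(Pu) \in F$, and which moreover satisfies the $\para$ tolerance condition. The key step is then to observe that this very same $M$ already works for the premise-sequent. Since $M$ is $\para$ tolerant and we have both $M(Pt) \in T$ and $M(Pu) \in F$, the definition of $\para$ tolerance delivers $M(t \sim_P u) \in F$ directly. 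Hence $M$ maps every $\gamma \in \G$ into $T$ and maps $t \sim_P u$ together with every $\delta \in \Del$ into $F$, still takes all values in $V$, and remains $\para$ tolerant. That is exactly what it takes for $M$ to be a $\para$ tolerant countermodel to $\sqq{\G}{t \sim_P u, \Del}$, so $\G \not\pscp t \sim_P u, \Del$, completing the contrapositive.

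I expect no real obstacle here: the fact is essentially immediate from the definitions, requiring neither induction nor any alteration of the model. The only point worth stating carefully is that discharging $Pt$ from the antecedent and $Pu$ from the succedent costs nothing, because the conditions $M(Pt) \in T$ and $M(Pu) \in F$ are used \emph{only} to invoke $\para$ tolerance and extract $M(t \sim_P u) \in F$; once that value is secured, those two formulas are no longer needed. In this sense the $\para$ tolerance condition on models has been engineered precisely to mirror the inferential content of the rule Tol, which is what makes the argument a one-line model manipulation rather than anything requiring proof-theoretic machinery.
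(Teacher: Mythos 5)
Your proof is correct and is essentially identical to the paper's own argument: both proceed by contraposition, take the assumed $\para$ tolerant countermodel to $\sqq{\G, Pt}{Pu, \Del}$, use the $\para$ tolerance condition on $M(Pt) \in T$ and $M(Pu) \in F$ to conclude $M(t \sim_P u) \in F$, and then observe that the very same model is a countermodel to $\sqq{\G}{t \sim_P u, \Del}$. No differences worth noting.
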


\begin{proof}
Let $\para = \tuple{V, T, F}$, and suppose that $\G\ \cup\ \{Pt\} \not\pscp \{Pu\}\ \cup\ \Del$. Then there is a $\para$ tolerant model $M$ such that $M(\gamma) \in T$ for every $\gamma \in \G \cup \{Pt\}$ and $M(\delta) \in F$ for every $\delta \in \Del \cup \{Pu\}$. 
In particular, $M(Pt) \in T$ and $M(Pu) \in F$. Since $M$ is $\para$ tolerant, then, $M(t \sim_P u) \in F$. But then $\G \not\pscp t \sim_P u, \Del$, since $M$ is a countermodel to this argument as well.
\end{proof}

As the proof reveals, $\para$ tolerance is just what is needed to guarantee metainferential tolerance. Since all parameterised consequence relations are reflexive, this is enough to guarantee argument-form tolerance as well. (For other forms of tolerance, we need more restrictions yet; leave those to one side for now.)

Moreover, these consequence relations are tied quite tightly to classical logic, as Fact \ref{paraconservative} records:

\begin{fact} \label{paraconservative}
For every parameter $\para$, $\pscp$ is a {\em conservative extension} of classical logic: if $\G \pscp \Del$ and $\G \not\Cc \Del$, then some $\sim$ predicate occurs in $\G \cup \Del$.
\end{fact}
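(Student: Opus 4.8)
The plan is to prove the contrapositive in the form that really matters: assuming no $\sim$ predicate occurs in $\G \cup \Del$ and $\G \not\Cc \Del$, I would produce a $\para$ tolerant countermodel to $\sqq{\G}{\Del}$, which gives $\G \not\pscp \Del$. Theorem \ref{paraclassical} already does most of the work, since $\G \not\Cc \Del$ yields $\G \not\pcp \Del$, so there is an ordinary $\para$ countermodel $M = \tuple{D, I}$ to $\sqq{\G}{\Del}$. The only property $M$ may lack is $\para$ tolerance, and that property constrains exactly the values $M$ assigns to $\sim_P$ atoms. The guiding idea is that, because the argument is $\sim$-free, those values are entirely free to be redefined without changing the value of any formula in $\G \cup \Del$.

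The key step is therefore to build $M^* = \tuple{D, I^*}$ that agrees with $M$ on every predicate other than the $\sim_P$'s, and that redefines each $I^*(\sim_P)(d, e)$ to equal $0$ exactly when $\para$ tolerance demands it --- that is, when $I(P)(d) \in T$ and $I(P)(e) \in F$ (or symmetrically, with $d$ and $e$ swapped) --- and to equal $1$ otherwise. I would then verify three things about this reassignment: that it is symmetric (immediate, since the triggering condition is a disjunction symmetric in $d$ and $e$); that it is reflexive, i.e.\ $I^*(\sim_P)(d, d) = 1$; and that it is $\para$ tolerant, which holds by construction because $0 \in F$.

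The one point that needs more than routine checking is that the reflexivity and symmetry requirements on $\sim$ never collide with the $\para$ tolerance requirement. Both potential collisions are ruled out by the same feature of the parameter definition, namely that $T \subseteq (.5, 1]$ and $F \subseteq [0, .5)$ are disjoint. Disjointness means no value lies in both $T$ and $F$, so $\para$ tolerance can never force $I^*(\sim_P)(d, d)$ into $F$ (keeping reflexivity safe), and for a pair $d, e$ the two directed instances of the constraint can never fire simultaneously (keeping the symmetric assignment consistent). This is exactly where the proof relies on $\para$ being a genuine parameter rather than an arbitrary triple, and I expect it to be the main obstacle to state cleanly.

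Finally I would assemble the conclusion. Since $M$ is a $\para$ countermodel, its non-$\sim$ predicate values lie in $V$, while the new $\sim$ values lie in $\{0, 1\} \subseteq V$; as $V$ is closed, Fact \ref{closedfact} guarantees that $M^*$ assigns every formula a value in $V$. Because $\G \cup \Del$ contains no $\sim$ predicate, $M^*$ and $M$ agree on every formula there, so $M^*(\gamma) \in T$ for $\gamma \in \G$ and $M^*(\delta) \in F$ for $\delta \in \Del$. Thus $M^*$ is a $\para$ tolerant model that is also a $\para$ countermodel to $\sqq{\G}{\Del}$, hence a $\para$ tolerant countermodel, witnessing $\G \not\pscp \Del$. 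The extension half of the claim needs no separate argument: every $\para$ tolerant countermodel is a $\para$ countermodel, so $\pcp \subseteq \pscp$, and $\pcp = \Cc$ by Theorem \ref{paraclassical}, whence every classically valid argument is $\para$ tolerant valid.
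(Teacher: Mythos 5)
Your proof is correct, and it follows the same overall strategy as the paper's: take a countermodel to the $\sim$-free argument and reassign the $\sim_P$ predicates with values in $\{0,1\}$ so that parameter tolerance holds by construction, the enabling fact in both cases being that $T \subseteq (.5, 1]$ and $F \subseteq [0, .5)$ are disjoint. The execution differs in two places. First, the paper does not route through Theorem \ref{paraclassical}: it starts from a \emph{classical} countermodel $M$ for the $\sim$-free language, which is automatically a $\para$ countermodel because $\{0,1\} \subseteq V$, $1 \in T$, and $0 \in F$. Starting $\{0,1\}$-valued also makes the appeal to Fact \ref{closedfact} frictionless, whereas your step ``$M$'s non-$\sim$ predicate values lie in $V$'' extracts predicate-level information from Definition \ref{paramdefn}, which as literally stated only constrains formula values (a harmless looseness that the paper's definitions share, but which your version leans on more heavily). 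Second, and more substantively, the interpretation given to $\sim_P$ differs: you take the \emph{minimal} tolerant assignment, setting $t \sim_P u$ to $0$ exactly where tolerance forces it, which obliges you to check non-collision with reflexivity and symmetry; the paper instead interprets $t \sim_P u$ as \emph{identity of denotations} ($1$ if $M(t) = M(u)$, else $0$), for which reflexivity and symmetry are immediate and tolerance follows because atoms $Pt \in T$ and $Pu \in F$ force $M(t) \neq M(u)$, again by disjointness. One small imprecision in your collision analysis: simultaneous firing of the two directed instances would not actually be inconsistent, since both demand the same value $0$; the only place disjointness is genuinely indispensable in your construction is reflexivity, i.e.\ ruling out that tolerance forces $I^*(\sim_P)(d,d) = 0$. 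Your construction has the virtue of making explicit exactly where the shape of the parameter matters; the paper's identity trick buys shorter verifications. Finally, your closing observation that $\Cc \;=\; \pcp \;\subseteq\; \pscp$ supplies the ``extension'' half of the claim, which the paper's proof leaves implicit.
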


\begin{proof}
Suppose that $\G \cup \Del$ contains no occurrences of any $\sim_P$ relation, and suppose that $\G \not\Cc \Del$. Then there is a classical countermodel $M$ for the language without $\sim$ relations. Extend $M$ to a model $M'$ of the full language as follows: for all terms $t, u$ and predicates $P$, if $M(t) = M(u)$, then $M'(t \sim_P u) = 1$; else $M'(t \sim_P u) = 0$. $M'$ is $\para$ tolerant, as is quick to check. But $M'(A) = M(A)$ for all $A \in \G \cup \Del$. So $M'$ is a $\para$ tolerant countermodel: $\G \not\pscp \Del$.
\end{proof}

The $\para$ tolerant consequence relations, then, are a way of extending classical logic to take account of the special behaviour of $\sim$ relations, without messing with anything that doesn't involve these relations.

\subsection{Sorites} \label{sorites}

As we flagged above, \STsim\ handles the combination of classical logic and tolerance without running into sorites problems via {\em nontransitivity}. A natural question that arises at this point, then, is: which of the $\pscp$ relations work the same way? The answer is: the {\em proper} ones.

\begin{defn}
A parameter $\para = \tuple{V, T, F}$ is {\em proper} iff $V \not\subseteq T \cup F$. The consequence relation $\pscp$ is {\em proper} iff $\para$ is.
\end{defn}

Proper parameters are parameters with some value that is neither in $T$ nor $F$; such a value is needed for any counterexample to transitivity to arise. With this notion in hand, we have the following containment result:

\begin{fact} \label{parainst}
For every proper parameter $\para$, $\pscp \;\subseteq\; \STsc$.
\end{fact}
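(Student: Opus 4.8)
The plan is to prove the contrapositive: from an \STsim\ countermodel to $\sqq{\G}{\Del}$ I will manufacture a $\para$ tolerant countermodel, so that $\G \not\STsc \Del$ yields $\G \not\pscp \Del$. Fix a proper parameter $\para = \tuple{V, T, F}$. Properness hands me a value $v^* \in V$ with $v^* \notin T \cup F$, and since $0 \in F$ and $1 \in T$ this $v^*$ must satisfy $0 < v^* < 1$. Now take any \STsim\ countermodel $M = \tuple{D, I}$ (a strong Kleene model obeying the \STsim\ restriction). Define $\phi \colon \{0, \half, 1\} \to V$ by $\phi(0) = 0$, $\phi(1) = 1$, $\phi(\half) = v^*$, and let $M' = \tuple{D, I'}$ share $M$'s domain and term interpretations while setting $I'(P) = \phi \circ I(P)$ pointwise for every predicate $P$. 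The atomic values of $M'$ then lie in $\{0, v^*, 1\} \subseteq V$, so since $V$ is closed, Fact~\ref{closedfact} guarantees $M'(A) \in V$ for every formula $A$. One also checks that $M'$ inherits the reflexivity and symmetry of $\sim$ from $M$, since $\phi(1) = 1$ and $\phi$ is applied uniformly.

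The technical core is a value-transfer lemma: for every formula $A$, if $M(A) = 1$ then $M'(A) = 1$, and if $M(A) = 0$ then $M'(A) = 0$. This is a routine induction on $A$'s formation, structurally identical to the proof of Lemma~\ref{crispfact}. The base case is immediate from the definition of $\phi$, and the inductive cases use only that $\min$, $\max$, $\glb$, and $\lub$ are monotone, that $\neg$ swaps $0$ and $1$, and that every $M'$-value lies in $[0,1]$ (so, e.g., a conjunction with a $0$-valued conjunct is pinned to $0$, and a disjunction with a $1$-valued disjunct to $1$). Crucially, I never need to track where the value $\half$ is sent beyond recording $\phi(\half) = v^* \in (0,1)$, distinct from $0$ and $1$; this is exactly what lets the two implications be carried through without a third inductive clause.

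Granting this lemma, the verification that $M'$ is a $\para$ countermodel to $\sqq{\G}{\Del}$ is immediate: every premise has $M$-value $1$, hence $M'$-value $1 \in T$, and every conclusion has $M$-value $0$, hence $M'$-value $0 \in F$. The one genuinely delicate step — and the place where properness is essential — is checking $\para$ tolerance. Because $Pt$, $Pu$, and $t \sim_P u$ are atomic, their $M'$-values are just $\phi$ applied to their $M$-values. Since $v^* \notin T$ and $0 \notin T$, the only way to get $M'(Pt) \in T$ is $M(Pt) = 1$; dually, since $v^* \notin F$ and $1 \notin F$, $M'(Pu) \in F$ forces $M(Pu) = 0$. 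The \STsim\ restriction on $M$ then delivers $M(t \sim_P u) = 0$, whence $M'(t \sim_P u) = \phi(0) = 0 \in F$, which is precisely what $\para$ tolerance demands.

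I expect this last step to be the main obstacle, and it is worth being explicit about why properness cannot be dropped: if the only available intermediate value were pushed into $T$ or $F$, then an atom of intermediate $M$-value sitting in premise position could spuriously acquire positive status, and the transfer of the \STsim\ restriction to $\para$ tolerance would break. This is also why one cannot simply crispify $M$ via Lemma~\ref{hascrisp}: crispification sends $\half$-valued atoms to $0$ or $1$, which can destroy $\para$ tolerance. Keeping the $\half$-atoms parked at the neutral value $v^*$ is exactly the device that preserves tolerance while leaving the classical $0$/$1$ behaviour untouched. Assembling the pieces, $M'$ is a $\para$ tolerant countermodel to $\sqq{\G}{\Del}$, so $\G \not\pscp \Del$, completing the contrapositive and hence the inclusion $\pscp \subseteq \STsc$.
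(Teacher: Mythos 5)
Your proof is correct and follows essentially the same route as the paper's: pass to the contrapositive, use properness to pick a value $v^*\notin T\cup F$, send the $\half$-valued atoms of an \STsim\ countermodel to $v^*$ while fixing $0$ and $1$, prove the $0$/$1$ transfer lemma by induction, and check $\para$ tolerance via the same case analysis on which $M$-values can land in $T$ or $F$. The only differences are cosmetic (you invoke Fact~\ref{closedfact} with $V$ itself where the paper notes $\{0,x,1-x,1\}$ is closed, and you phrase the tolerance check directly rather than by contradiction).
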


\begin{proof}
Let $\para = \tuple{V, T, F}$, and suppose $\G \not\STsc \Del$. Then there is a model $M$ meeting the \STsim\ restriction such that: $M(\gamma) = 1$ for all $\gamma \in \G$, and $M(\delta) = 0$ for all $\delta \in \Del$. Since $V \not\subseteq T \cup F$, there must be some value in $V$ that is in neither $T$ nor $F$; call it $x$. Since $V$ is closed, $1-x \in V$ as well. Now consider the model $M'$ defined as follows: for all $n$ for all $n$-ary predicates $P$,
\begin{itemize}
\item If $M(P)(\tuple{d_1, \ldots, d_n}) = 1$, then $M'(P)(\tuple{d_1, \ldots, d_n}) = 1$;
\item if $M(P)(\tuple{d_1, \ldots, d_n}) = 0$, then $M'(P)(\tuple{d_1, \ldots, d_n}) = 0$; and
\item if $M(P)(\tuple{d_1, \ldots, d_n}) = .5$, then $M'(P)(\tuple{d_1, \ldots, d_n}) = x$.
\end{itemize}
Note that $\{0, x, 1-x, 1\}$ is closed, so $M'$ assigns only these values. Moreover, we can show by induction on $A$'s formation that for all formulas $A$, if $M(A) = 1$ or $0$, then $M'(A) = M(A)$. Thus, $M'$ still assigns 1 to everything in $\G$ and 0 to everything in $\Del$. Moreover, it assigns values only from $V$, so it is a $\para$ countermodel. It remains only to show that $M'$ is $\para$ tolerant. Suppose $M'(Pt) \in T$ and $M'(Pu) \in F$. There are three cases:
\begin{itemize}
\item If $M(Pt) = 1$ and $M(Pu) = 0$, we have $M(t \sim_P u) = 0$, since $M$ meets the \STsim\ restriction, and so $M'(t \sim_P u) = 0 \in F$. 
\item If $M(Pt) \neq 1$, then $M(Pt) = .5$ or $0$, and so $M'(Pt) = x$ or $0$. But either way, $M'(Pt) \not \in T$; contradiction.
\item If $M(Pu) \neq 0$, then $M(Pu) = .5$ or $1$, and so $M'(Pu) = x$ or $1$. But either way, $M'(Pu) \not\in F$; contradiction.
\end{itemize}
$M'$ is thus a $\para$ tolerant countermodel to $\sqq{\G}{\Del}$, and so $\G \not\pscp \Del$.
\end{proof}

The sequent calculus in Figure \ref{stsimcalc} is thus complete for every proper $\pscp$. 
(It is not, however, sound for all of them; we return to this in \S\ref{metainferences}.)

Fact \ref{parainst} does not hold for improper parameters. This is because improper parameters walk straight into the business end of the sorites paradox.

\begin{fact}
For any improper $\para$ and any $n$ terms $t_1, \ldots, t_n$: $Pt_1, t_1 \sim_P t_2, t_2 \sim_P t_3, \ldots, t_{n-1} \sim_P t_n \pscp Pt_n$.
\end{fact}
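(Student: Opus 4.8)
The plan is to argue by contraposition: I assume there is a $\para$ tolerant countermodel $M$ to the sorites argument and extract from it a violation of $\para$ tolerance at the premises. Throughout I lean on two structural features of an improper parameter $\para = \tuple{V, T, F}$. First, since $M$ is a $\para$ countermodel it assigns every formula a value in $V$; in particular each $M(Pt_i) \in V$. Second, improperness says exactly that $V \subseteq T \cup F$, and since $T \subseteq (.5, 1]$ while $F \subseteq [0, .5)$, the sets $T$ and $F$ are disjoint. Together these yield the key dichotomy: each of the values $M(Pt_1), \ldots, M(Pt_n)$ lies in precisely one of $T$ and $F$.

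Next I would locate the jolt. Because $M$ is a countermodel to $\sqq{Pt_1, t_1 \sim_P t_2, \ldots, t_{n-1} \sim_P t_n}{Pt_n}$, it must send the premise $Pt_1$ into $T$ and the conclusion $Pt_n$ into $F$. Reading the statuses of $M(Pt_1), \ldots, M(Pt_n)$ in order, we therefore begin in $T$ and end in $F$; since each status is one or the other and the two are disjoint, there must be some index $k$ with $1 \leq k \leq n-1$ such that $M(Pt_k) \in T$ and $M(Pt_{k+1}) \in F$. (When $n = 1$ the search degenerates, but then $M(Pt_1)$ would have to lie in both $T$ and $F$ directly, which is already impossible.)

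Finally I would apply $\para$ tolerance at this crossing point. Instantiating the definition with the predicate $P$ and the terms $t_k, t_{k+1}$, from $M(Pt_k) \in T$ and $M(Pt_{k+1}) \in F$ we obtain $M(t_k \sim_P t_{k+1}) \in F$. But $t_k \sim_P t_{k+1}$ is itself one of the premises of the argument, so being a countermodel forces $M(t_k \sim_P t_{k+1}) \in T$; as $T \cap F = \emptyset$, this is a contradiction. Hence no $\para$ tolerant countermodel exists, which is exactly $\para$ tolerant validity. I expect no genuine technical obstacle here; the only step that must be used with care is improperness itself. It is precisely the condition $V \subseteq T \cup F$ that forbids any ``buffer'' value outside both $T$ and $F$ at which the descent from positive to negative status could be cushioned --- the very third-value escape route that proper parameters exploit in Fact \ref{parainst}. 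That contrast is exactly what this Fact is meant to exhibit.
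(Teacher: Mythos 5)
Your proof is correct and follows essentially the same route as the paper's: assume a $\para$ tolerant countermodel, use improperness (plus the disjointness of $T$ and $F$ built into the definition of a parameter) to locate a first crossing from $T$ to $F$ along the sequence $M(Pt_1), \ldots, M(Pt_n)$, and then apply $\para$ tolerance at that crossing to force a similarity premise into $F$, contradicting its status as a premise. The only cosmetic difference is that the paper finds the first $i$ with $M(Pt_i) \notin T$ and then invokes improperness to place it in $F$, whereas you establish the $T$/$F$ dichotomy up front; these are the same argument.
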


\begin{proof}
Let $\para = \tuple{V, T, F}$, and suppose the claim fails. Then there is some $\para$ tolerant countermodel $M$. It must be that $M(Pt_1) \in T$ \addEP{and that $M(Pt_n) \in F$}, hence $M(Pt_n) \not\in T$; so there is a first $i \leq n$ such that $M(Pt_i) \not\in T$. Since $i$ is the first such, $M(Pt_{i-1}) \in T$, and since $\para$ is improper, $M(Pt_i) \in F$. Since $M$ is $\para$ tolerant, then, $M(t_{i-1} \sim_P t_i) \in F$. But then $M$ cannot be a countermodel to this argument, since $t_{i-1} \sim_P t_i$ is among its premises. Contradiction.
\end{proof}

So while improper parameters give rise to tolerant conservative extensions of classical logic, they are not useful for exploring sorites sequences, as they fall victim to the sorites paradox, by requiring that if the first member of a sorites sequence for $P$ is $P$, then so is the last. Much of what follows holds for parameters whether or not they are proper, so we do not restrict our attention only to proper parameters, but we do think that improper parameters are unlikely to be of any help in treating vagueness, since they do not avoid the key problem posed by sorites sequences.

On the other hand, proper parameters give rise to nontransitivity just where it is needed. Consider the form of transitivity embodied in the rule of {\em Cut}:
\begin{prooftree}
\AXx{\G}{A, \Del}
\AXx{\G', A}{\Del'}
\LLl{Cut}
\BIx{\G, \G'}{\Del, \Del'}
\end{prooftree}

\begin{fact}
For any proper $\para$, $\pscp$ is not closed under Cut.
\end{fact}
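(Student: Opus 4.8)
The plan is to refute Cut directly, by producing two $\pscp$-valid premise-sequents whose Cut-conclusion carries a $\para$ tolerant countermodel. Since $\para = \tuple{V, T, F}$ is proper, the first move is to fix a value $x \in V$ with $x \notin T$ and $x \notin F$; this is the value that will sit ``in the gap'' and block transitivity. I would stage everything around a three-element sorites chain with terms $a, b, c$.

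First I would pin down the two premises, both instances of argument-form tolerance. By Fact \ref{paratol}, $\pscp$ is tolerant, and since $\pscp$ is reflexive, the Id-sequent $t \sim_P u \pscp t \sim_P u$ together with the rule Tol yields $Pt, t \sim_P u \pscp Pu$. Instantiating, both $Pa, a \sim_P b \pscp Pb$ and $Pb, b \sim_P c \pscp Pc$ hold. Cutting on the formula $Pb$ (with $\G = \{Pa, a \sim_P b\}$, $\G' = \{b \sim_P c\}$, $\Del = \emptyset$, $\Del' = \{Pc\}$) would deliver $Pa, a \sim_P b, b \sim_P c \pscp Pc$, so it suffices to show this conclusion-sequent is invalid.

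Next I would exhibit a $\para$ tolerant countermodel $M$ to it, over the domain $\{a, b, c\}$ with constants interpreted as themselves. Set $M(Pa) = 1$, $M(Pb) = x$, $M(Pc) = 0$; interpret $\sim_P$ as the reflexive, symmetric relation with $M(a \sim_P b) = M(b \sim_P c) = 1$ and $M(a \sim_P c) = 0$; and for every other predicate and every other similarity relation $\sim_Q$, assign all atoms the value $1$ (which keeps those relations reflexive, symmetric, and vacuously $\para$ tolerant, since none of their atoms lands in $F$). All values lie in $\{0, x, 1\} \subseteq V$. Then $M(Pa), M(a \sim_P b), M(b \sim_P c) \in T$ while $M(Pc) \in F$, so $M$ is a $\para$ countermodel to the conclusion-sequent. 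The one point to verify carefully is that $M$ is genuinely $\para$ tolerant: among the $P$-atoms only $Pa$ lands in $T$ and only $Pc$ lands in $F$ (crucially, $M(Pb) = x$ lies in neither), so the sole constraint imposed by $\para$ tolerance is $M(a \sim_P c) \in F$, which holds by construction. Hence the conclusion-sequent is not $\pscp$-valid although both premises are, and Cut fails.

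The main obstacle is exactly this $\para$ tolerance check, and it is where properness earns its keep: had $x$ been forced into $T$ or into $F$, the middle value $M(Pb)$ would trigger a further tolerance constraint (on $a \sim_P b$ or on $b \sim_P c$) incompatible with assigning those similarities the value $1$, and the countermodel would collapse. This is precisely why the statement is confined to proper parameters.
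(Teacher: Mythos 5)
Your proof is correct, and it follows the paper's skeleton exactly up to the last step: same two premise-sequents $Pa, a \sim_P b \pscp Pb$ and $Pb, b \sim_P c \pscp Pc$ (justified, as in the paper, by reflexivity plus Fact \ref{paratol}), and the same Cut instance yielding the three-link sorites sequent. Where you diverge is in showing that sequent invalid: the paper simply observes that $\sqq{Pt_1, t_1 \sim_P t_2, t_2 \sim_P t_3}{Pt_3}$ is not \STsim\ valid and then invokes the containment $\pscp \subseteq \STsc$ of Fact \ref{parainst}, whereas you construct a $\para$ tolerant countermodel directly, using the properness witness $x \in V \setminus (T \cup F)$ as the value of $Pb$. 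Your construction is in effect the specialization to this one sequent of the model-transfer argument inside the proof of Fact \ref{parainst} (where the \STsim\ value $\half$ is replaced by $x$), so what you buy is self-containment: your argument does not depend on Fact \ref{parainst} at all, at the cost of redoing the tolerance check that the lemma packages once and for all. One small imprecision: you claim all values of $M$ lie in $\{0, x, 1\}$, but compound formulas (e.g.\ $\neg Pb$) can take the value $1 - x$; this is harmless, since the atoms take values in $V$ and $V$ is closed under $y \mapsto 1-y$, glb and lub, so Fact \ref{closedfact} still guarantees every formula receives a value in $V$, which is all the countermodel definition requires.
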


\begin{proof}
Since every $\pscp$ is reflexive and tolerant, they all validate the arguments $\sqq{Pt_1, t_1 \sim_P t_2}{Pt_2}$ and $\sqq{Pt_2, t_2 \sim_P t_3}{Pt_3}$. If we could apply cut to these, we would reach $\sqq{Pt_1, t_1 \sim_P t_2, t_2 \sim_P t_3}{Pt_3}$. But this last is not \STsim\ valid, so not $\para$ tolerant valid for any proper $\para$ by Fact \ref{parainst}.
\end{proof}

Proper $\pscp$ relations thus handle sorites reasoning just like $\STsc$: by validating each step of the reasoning, but refusing to allow them to be chained together via Cut.

\subsection{Metainferences} \label{metainferences}

Theorem \ref{paraclassical} guarantees that every $\para$ tolerant consequence relation is {\em argument classical}: they all validate every instance of every classically-valid argument. But this is just one way to be classical; we might (and probably should) want more.

As we pointed out above, the sequent calculus in Figure \ref{stsimcalc} is complete for all proper parameter tolerant consequence, but it is not {\em sound} for all of them. This may at first seem counterintuitive, since every $\pscp$ strengthens classical logic and obeys the similarity rules, while these rules are the only rules in Figure \ref{stsimcalc} \emph{not} sound for classical logic. But counterintuitive or not, it is the case; we pause here to sort this out.

We can use the calculus of Figure \ref{stsimcalc} to identify certain {\em metainferences} important to classicality. (A {\em metainference} is a property that a consequence relation may or may not be closed under. For discussion, see e.g.\ \cite{field:stp, scharp:rt, cervr:rtt, brt:lstl}.) 

\begin{fact} \label{clutter} 
Every $\para$ tolerant consequence relation is closed under the metainferences given by the rules Id, K, $\sim$ref, $\sim$symL, and $\sim$symR.
\end{fact}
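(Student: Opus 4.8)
The plan is to dispatch all five rules uniformly, by contraposition. For a rule with conclusion-sequent $S$ and premise-sequent(s) $S_i$, closure amounts to the claim that if $S$ is $\para$ tolerant invalid then so is some $S_i$. In each case I will reuse the very model $M$ that witnesses the invalidity of $S$, checking only that it also witnesses the invalidity of the relevant premise-sequent. Since the value assignment of $M$ is never altered, the value constraint $M(A) \in V$ and $\para$ tolerance are both preserved for free; only the membership conditions $M(\gamma) \in T$ and $M(\delta) \in F$ need rechecking. In particular, $\para$ tolerance plays no active role here: it is simply carried along unchanged.

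For Id (an axiom), closure means $A \pscp A$ for every formula $A$. Were this to fail, some $\para$ tolerant $M$ would satisfy both $M(A) \in T$ and $M(A) \in F$; but $T \subseteq (.5, 1]$ and $F \subseteq [0, .5)$ are disjoint, so this is impossible. For K, any $\para$ tolerant countermodel $M$ to $\sqq{\G, \G'}{\Del, \Del'}$ already maps $\G$ into $T$ and $\Del$ into $F$, and so is a $\para$ tolerant countermodel to $\sqq{\G}{\Del}$; this is just the monotonicity of $\pscp$ already noted above.

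For the three similarity rules I lean on the standing restriction (\S\ref{tolerance}) that every model in play is reflexive and symmetric on $\sim$. For $\sim$ref, given a $\para$ tolerant countermodel $M$ to $\sqq{\G}{\Del}$, reflexivity gives $M(t \sim_P t) = 1$, and since $1 \in T$ and $1 \in V$, the same $M$ is a $\para$ tolerant countermodel to $\sqq{\G, t \sim_P t}{\Del}$. For $\sim$symL and $\sim$symR, symmetry gives $M(t \sim_P u) = M(u \sim_P t)$, so a countermodel placing $u \sim_P t$ among the premises (resp.\ conclusions) assigns $t \sim_P u$ the same value and is therefore a countermodel placing $t \sim_P u$ among the premises (resp.\ conclusions), as required. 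There is no genuine obstacle: the fact is pure bookkeeping, and the only points demanding attention are the disjointness of $T$ and $F$ (which drives Id) and the reflexivity and symmetry restrictions on $\sim$ (which drive the three similarity rules). Beyond $\{0,1\} \subseteq V$, $1 \in T$, $0 \in F$, and $T \cap F = \emptyset$, no properties of the parameter are used.
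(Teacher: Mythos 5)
Your proof is correct and takes exactly the route the paper intends: the paper's own proof simply says ``Straightforward,'' pointing to the standing restrictions $M(t \sim_P t) = 1$ and $M(t \sim_P u) = M(u \sim_P t)$, and your argument is the careful spelling-out of that check, reusing the countermodel unchanged and invoking only the disjointness of $T$ and $F$ (for Id), monotonicity of the countermodel conditions (for K), and the $\sim$ restrictions together with $1 \in T$ (for the similarity rules).
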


\begin{proof}
Straightforward. (For the $\sim$ rules, recall that we have restricted our models so that $M(t \sim_P t) = 1$ and $M(t \sim_P u) = M(u \sim_P t)$.)
\end{proof}

Facts \ref{paratol} and \ref{clutter} ensure that every $\para$ tolerant consequence relation obeys the structural rules and similarity rules given in Figure \ref{stsimcalc}. This leaves the operational rules; these determine what we will call the {\em operational metainferences}. We say that a consequence relation is {\em operationally classical} iff it is closed under all of these operational metainferences.

It is entirely possible for a consequence relation to be argument classical {\em without} being operationally classical. For example, consider the metainference determined by the rule $\to$R, and consider the parameter $\para = \tuple{[0, 1], \{1\}, [0, .5)}$. We have $Pa \wedge a \sim_P b \pscp Pb$, but $\not\pscp (Pa \wedge a \sim_P b) \to Pb$; for a countermodel to the latter, let $M(Pa) = M(a \sim_P b) = .6$ and $M(Pb) = .4$. Note that this model can be $\para$ tolerant, since $M(Pa) \not\in T$.

Here, then, we describe the situation for the operational metainferences, identifying sufficient conditions for a parameter $\para$ to yield an operationally classical $\para$ tolerant consequence relation.

\begin{fact} \label{parami}
Every parameterised consequence is closed under $\wedge$L, $\wedge$R, $\vee$L, $\vee$R, $\forall$L, and $\exists$R.
\end{fact}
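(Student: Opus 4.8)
The plan is to prove all six closures uniformly by contraposition: from a countermodel $M$ to the conclusion-sequent I would exhibit a countermodel to (one of) the premise-sequent(s), and in every case the \emph{same} model $M$ already works. Because $M$ is reused verbatim, every condition a countermodel must meet beyond the premise/conclusion clauses is inherited for free---both the global requirement $M(A) \in V$ for all $A$ and, for the $\pscp$ relations, $\para$ tolerance---so the only thing I would need to re-check is the $T$/$F$ status of the few formulas that change between the conclusion-sequent and the premise-sequent. The argument thus rests on just two ingredients already in hand: the compositional clauses $M(A \wedge B) = \min(M(A), M(B))$, $M(A \vee B) = \max(M(A), M(B))$, $M(\forall x A(x)) = \glb\{\ldots\}$, $M(\exists x A(x)) = \lub\{\ldots\}$, together with the facts that $T$ is an upset and $F$ is a downset.

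I would then split the six rules into two families. In the first---$\wedge$L, $\vee$R, $\forall$L, $\exists$R---the status of the compound propagates to \emph{every} relevant immediate subformula by an inequality plus the upset/downset property. For $\wedge$L, if $M(A \wedge B) \in T$ then $M(A), M(B) \geq \min(M(A), M(B)) \in T$, so both lie in $T$ since $T$ is an upset, and $M$ is already a countermodel to $\sqq{\G, A, B}{\Del}$; the case $\vee$R is dual, using that $F$ is a downset. For $\forall$L, the instance value $M(A(t))$ is one of the values whose glb is $M(\forall x A(x))$---namely the value at the $x$-variant sending $x$ to $I(t)$---so $M(A(t)) \geq M(\forall x A(x)) \in T$, and $T$'s being an upset gives $M(A(t)) \in T$; $\exists$R is dual. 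In the second family---$\wedge$R and $\vee$L---the relevant extremum is instead \emph{attained} at one operand: if $M(A \wedge B) = \min(M(A), M(B)) \in F$, whichever of $M(A), M(B)$ realises the minimum already lies in $F$, so $M$ is a countermodel to the corresponding premise $\sqq{\G}{A, \Del}$ or $\sqq{\G}{B, \Del}$; and $\vee$L is dual, with $\max \in T$ realised at one disjunct.

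The one delicate point, and the step I would flag as the main obstacle, is $\forall$L (dually $\exists$R), where the glb over the possibly infinite set of $x$-variants need not be \emph{attained}. This is exactly where the upset/downset structure earns its keep: since I only need the inequality $M(A(t)) \geq M(\forall x A(x))$ and not equality, non-attainment is harmless. The same observation also explains why the rules absent from the statement are genuinely absent: $\forall$R and $\exists$L would require pushing a status \emph{against} the direction supplied by the order (from a glb in $F$ down to an attained instance in $F$), which non-attainment can block, and $\neg$L, $\neg$R---hence $\to$L, $\to$R, since $A \to B$ abbreviates $\neg A \vee B$---would require carrying a status across the map $x \mapsto 1 - x$, which need not send $T$ into $F$ or $F$ into $T$ for an asymmetric parameter. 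For the six rules at issue no such crossing is ever needed, so the same-model transfer goes through uniformly and covers $\pcp$ and $\pscp$ alike.
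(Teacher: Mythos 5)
Your proposal is correct and takes essentially the same route as the paper's own proof: contrapose, reuse the very same countermodel, and transfer $T$/$F$ status to the relevant subformulas using the $\min$/$\max$/$\glb$/$\lub$ clauses together with $T$ being an upset and $F$ a downset (with $\wedge$R/$\vee$L handled by attainment of the extremum, and $\forall$L/$\exists$R by the bound alone, exactly as the paper does). Your added observations---that $V$-membership and $\para$ tolerance are inherited for free by the same-model transfer, and that non-attainment and the lack of a $T$/$F$ link under $x \mapsto 1-x$ are precisely what excludes the eigenvariable and negative rules---are accurate and anticipate the paper's own later discussion.
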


\begin{proof}
Let $\para = \tuple{V, T, F}$.

For $\wedge$L: A countermodel to the conclusion-sequent must assign some value in $T$ to $A \wedge B$; but since this value is the minimum of the values of $A$ and $B$, and since $T$ is an upset, this model must assign some value in $T$ to each of $A$ and $B$, and so be a countermodel to the premise-sequent.

For $\wedge$R: A countermodel to the conclusion-sequent must assign some value in $F$ to $A \wedge B$; but since this value is the minimum of the values of $A$ and $B$, this model must assign that value (and so some value in $F$) to at least one of $A$ or $B$, and so be a countermodel to at least one premise-sequent.

For $\forall$L: A countermodel $M$ to the conclusion-sequent must assign some value in $T$ to $\forall x A(x)$; but since this value is a lower bound for the values of $A(x)$ in $x$-variants of $M$, and since $T$ is an upset, all of these $x$-variants must assign some value in $T$ to $A(x)$. There must be some $x$-variant $M'$ such that $M'(x) = M(t)$, so $M(A(t)) \in T$; $M$ is thus a countermodel to the premise-sequent.

$\vee$L is similar to $\wedge$R; $\vee$R to $\wedge$L; and $\exists$R to $\forall$L.
\end{proof}

This leaves two kinds of metainferences: the {\em negative} ones $\neg$L, $\neg$R, $\to$L, and $\to$R; and the {\em eigenvariable} ones $\forall$R and $\exists$L. Parameterised tolerant consequence relations as such are not guaranteed to be closed under any of these. The trouble with the negative ones is that there is no connection between $x \in T$ and $1-x \in F$; and the trouble with the eigenvariable ones is that the lub or glb of a set $X$ might be in $T$ or $F$ without any member of $X$ being so. To ensure that the remaining operational metainferences work, we need to tighten up our parameters in these two ways.

\begin{defn}
A parameter $\para = \tuple{V, T, F}$ is {\em symmetric} iff for all $x \in [0, 1]$: $x \in T$ iff $1 - x \in F$. The consequence relation $\pscp$ is {\em symmetric} iff $\para$ is.
\end{defn}

In a symmetric parameter $\tuple{V, T, F}$, $T$ and $F$ are mirror images of each other, with the function $\neg(x) = 1 - x$ serving as the mirror.

\begin{fact} \label{negmi}
$\para$ is symmetric iff $\pcp$ is closed under $\neg$L and $\neg$R iff $\pcp$ is closed under $\to$L and $\to$R.

\end{fact}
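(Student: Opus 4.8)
The plan is to prove the triple equivalence as a cycle: symmetric $\Rightarrow$ closure under $\neg$L and $\neg$R $\Rightarrow$ closure under $\to$L and $\to$R $\Rightarrow$ symmetric. (Throughout I read the relation as the $\para$ tolerant $\pscp$: by Theorem~\ref{paraclassical} the plain $\pcp$ is classical and so satisfies all four closures unconditionally, which is why $\pscp$ is the intended relation here.) Each arrow isolates one idea, and only the last requires real work.

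For \emph{symmetric $\Rightarrow$ $\neg$L, $\neg$R}, I argue soundness on a fixed model, in the style of Fact~\ref{parami}. Any $\para$ tolerant countermodel $M$ to $\sqq{\G, \neg A}{\Del}$ has $M(\neg A) = 1 - M(A) \in T$, whence symmetry in the form $y \in T \Rightarrow 1-y \in F$ gives $M(A) \in F$; the very same $M$ is then a countermodel to $\sqq{\G}{A, \Del}$, and it is still $\para$ tolerant because tolerance is a property of $M$ alone. So validity of the premise-sequent forces validity of the conclusion-sequent. The rule $\neg$R is the mirror image, using $z \in F \Rightarrow 1-z \in T$.

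For \emph{$\neg$L, $\neg$R $\Rightarrow$ $\to$L, $\to$R}, I exploit that Smith's conditional satisfies $M(A \to B) = M(\neg A \vee B)$ in every model, so $\sqq{\G}{A \to B, \Del}$ and $\sqq{\G}{\neg A \vee B, \Del}$ have identical countermodels and hence identical validity (and likewise on the left). Since Fact~\ref{parami} already gives closure under $\vee$L and $\vee$R for \emph{every} parameter, I can simulate $\to$R by $\neg$R followed by $\vee$R (sending $\sqq{\G, A}{B, \Del}$ first to $\sqq{\G}{\neg A, B, \Del}$ and then to $\sqq{\G}{\neg A \vee B, \Del}$), and $\to$L by one use of $\neg$L together with $\vee$L. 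Reading the outcome back through $A \to B \equiv \neg A \vee B$ completes the step.

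The load-bearing arrow is \emph{$\to$L, $\to$R $\Rightarrow$ symmetric}, which I prove contrapositively by manufacturing a broken $\to$ metainference from any failure of symmetry. A failure supplies a value $v \in V$ that violates the mirror between $T$ and $F$ in one of two ways; take the case $1 - v \in F$ but $v \notin T$ (the other is dual, using $\to$L and the reflection $v \leftrightarrow 1-v$). Starting from the tolerance-valid sequent $\sqq{a \sim_P b, Pa}{Pb}$ (argument-form tolerance, Fact~\ref{paratol}) and applying $\to$R gives $\sqq{a \sim_P b}{Pa \to Pb}$; I refute this by setting $M(Pa) = v$, $M(Pb) = 0$, and $M(a \sim_P b) = 1$, so that $M(Pa \to Pb) = \max(1 - v, 0) = 1 - v \in F$ while $M(a \sim_P b) \in T$. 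The countermodel is legal precisely because $M(Pa) = v \notin T$, so the tolerance clause never fires on $a, b$ (the reflexive and symmetric $\sim$-values are set trivially); this reproduces and generalises the $\to$R counterexample given for $\tuple{[0,1], \{1\}, [0,.5)}$ just before the Fact. This construction is the main obstacle, for two reasons. It \emph{must} route through a $\sim$-induced validity: on the $\sim$-free fragment $\pscp$ is classical (Fact~\ref{paraconservative}) and hence honours every $\to$ metainference, and monotonicity kills any attempt to break a rule using a formula whose only relevant occurrence is a free atom. And the witnessing $v$ must genuinely lie in $V$ for $M$ to be a model, so the converse really tests symmetry \emph{on $V$}---automatic in Smith's case $V = [0,1]$, but the thing to watch when $V$ is small. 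With $v$ secured in $V$ and kept out of $T$, checking that $M$ is a $\para$ tolerant countermodel is routine.
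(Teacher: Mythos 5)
Your proposal reproduces the paper's core machinery: the symmetric-to-$\neg$ step is the same per-model soundness argument; the passage from $\neg$-closure to $\to$-closure via $M(A \to B) = M(\neg A \vee B)$ together with the $\vee$ closures of Fact~\ref{parami} is exactly the paper's displayed derivation; and your converse follows the paper's recipe (use Id and Tol to obtain a $\sim$-induced validity, apply the rule under test, then refute the result with a $\para$ tolerant countermodel whose critical value is kept out of $T$ so the tolerance clause never fires). Your packaging as a cycle is genuinely different, though: it lets you build counterexamples only for the $\to$ rules, where the paper refutes the $\neg$ rules directly and transfers the failure to $\to$ by noting that $A \to \neg(a \sim_P a)$ is equivalent to $\neg A$ on every model. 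Two of your side remarks actually sharpen the paper: reading the relation as $\pscp$ (the statement's $\pcp$ must be a slip, since the official proof appeals to Tol, and plain $\pcp$ is classical and hence closed under all four rules unconditionally), and insisting that the asymmetry witness lie in $V$ --- a condition the countermodel constructions need but the paper never checks; indeed for $\tuple{\{0,1\}, (.5,1], \{0\}}$, which is officially asymmetric, all four closures hold, so the equivalence is really about symmetry restricted to $V$.

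The genuine gap is the case you wave off as ``dual''. Your third arrow needs \emph{both} failure modes of symmetry to destroy $\to$-closure, and when the failure is $x \in T$ with $1 - x \notin F$ (and no value of $V$ witnesses the other mode), $\to$R cannot be broken at all: any $\para$ tolerant countermodel to $\sqq{\G}{A \to B, \Del}$ has $\max(1 - M(A), M(B)) \in F$, which, $F$ being a downset, forces $1 - M(A) \in F$ and $M(B) \in F$; if every $z \in V$ with $1 - z \in F$ lies in $T$, then $M(A) \in T$ and $M$ already refutes the premise-sequent. So in this case you must break $\to$L, and that is not a mechanical reflection of your $\to$R construction, because $\to$L has two premise-sequents and the tolerance restriction blocks the naive dual: from the valid sequents $\sqq{a \sim_P b}{a \sim_P b, Pb}$ and $\sqq{a \sim_P b, Pa}{Pb}$, the $\to$L conclusion $\sqq{a \sim_P b, (a \sim_P b) \to Pa}{Pb}$ has \emph{no} tolerant countermodel whatsoever --- it would require $M(Pa) \in T$, $M(Pb) \in F$, and $M(a \sim_P b) \in T$, exactly the combination tolerance forbids. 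What works is a surrogate negation, which is the paper's trick: since $M(\neg(c \sim_P c)) = 0$ on every admissible model, $Pb \to \neg(c \sim_P c)$ behaves as $\neg Pb$; applying $\to$L to the valid sequents $\sqq{Pa, a \sim_P b}{Pb}$ and $\sqq{Pa, a \sim_P b, \neg(c \sim_P c)}{}$ yields $\sqq{Pa, a \sim_P b, Pb \to \neg(c \sim_P c)}{}$, refuted by the tolerant model with $M(Pa) = M(a \sim_P b) = 1$ and $M(Pb) = 1 - x$, tolerant precisely because $1 - x \notin F$. Without this (or an equivalent) construction, the arrow from $\to$-closure to symmetry, and with it your whole cycle, is incomplete.
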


\begin{proof}

{LTR:} Let $\para = \tuple{V, T, F}$.
For $\neg$L: A countermodel $M$ to the conclusion-sequent must have $M(\neg A) \in T$. Since it is symmetric, this gives $1 - M(\neg A) \in F$. But $M(A) = 1 - M(\neg A)$, so $M(A) \in F$, and $M$ is a countermodel to the premise-sequent.

$\neg$R is similar to $\neg$L. 

Since $M(A \to B) = M(\neg A \vee B)$ in every model, it suffices for $\to$L and $\to$R to show that $\neg A \vee B$ obeys these rules. But this can be derived from $\neg$L, $\neg$R, $\vee$L, and $\vee$R as follows:

\begin{center}
\AXx{\G}{A, \Del}
\LLl{$\neg$L}
\UIx{\G, \neg A}{\Del}
\AXx{\G, B}{\Del}
\LLl{$\to$L}
\BIx{\G, \neg A \vee B}{\Del}
\DP
\hs
\AXx{\G, A}{B, \Del}
\LLl{$\neg$R}
\UIx{\G}{\neg A, B, \Del}
\LLl{$\vee$R}
\UIx{\G}{\neg A \vee B, \Del}
\DP

\end{center}


RTL: By Fact \ref{clutter}, we know that $\pcp$ obeys Id and Tol for any $\para$.
If $\pcp$ is also closed under $\neg$L, it follows that $Pa, a \sim_P b, \neg Pb \pcp$, so providing a counterexample to this would show that $\pcp$ isn't closed under $\neg$L.
Similarly, if $\pcp$ is closed under $\neg$R, it follows that $a \sim_P b \pcp Pb, \neg Pa$, so providing a counterexample to this would show that $\pcp$ isn't closed under $\neg$R.

So suppose $\para = \tuple{V, T, F}$ is not symmetric.
Then either there is $x \in T$ but $1 - x \not\in F$, or there is $x \in F$ but $1 - x \not\in T$.
Take a model $M$ with its domain containing just the terms $a$ and $b$; let the terms $a$ and $b$ denote themselves, and let all other constant terms denote $a$.

If there is $x \in T$ but $1 - x \not\in F$, then let $M(P)(a) = 1$, $M(P)(b) = 1 - x$, and $M(\sim_P)(a, b) = M(\sim_P)(b, a) = 1$.
For all other predicates, let them take everything to 1.
This clearly gives a $\para$ counterexample to $\sqq{Pa, a \sim_P b, \neg Pb\ }$; it is $\para$ tolerant as well, since the only real risk to tolerance comes from $a \sim_P b$, and while $M(Pa) \in T$, we do not have $M(Pb) \in F$.
So in this case $\pcp$ is not closed under $\neg$L.

Or if there is $y \in F$ but $1 - y \not \in T$, then let $M(P)(a) = y$, $M(P)(b) = 0$, and $M(\sim_P)(a, b) = M(\sim_P)(b, a) = 1$.
For all other predicates, let them take everything to 1.
This clearly gives a $\para$ counterexample to $\sqq{a \sim_Q b}{Pb, \neg Pa}$; it is $\para$ tolerant as well, since the only real risk to tolerance comes from $a \sim_Q b$, and while $M(Pb) \in F$, we do not have $M(Pa) \in T$. 
So in this case $\pcp$ is not closed under $\neg$R.

Similar arguments will show that in the first case $\pcp$ is not closed under $\to$L and in the second not under $\to$R.
(We have $M(\neg(a \sim_P a)) = 0$, so $A \to \neg(a \sim_P a)$ is equivalent to $\neg A$ on any model.)

\end{proof}

The negative metainferences addressed, only the eigenvariable metainferences remain.

\begin{defn} \label{opendefn}
A parameter $\para = \tuple{V, T, F}$ is {\em open} iff for all $X \subseteq V$, if $\glb{X} \in F$ then $F \cap X \neq \emptyset$, and if $\lub{X} \in T$ then $T \cap X \neq \emptyset$.\footnote{This condition on $T$ is that it be {\em Scott open} (see e.g.\ \cite[p.\ 95]{vickers:tvl}), and this condition on $F$ is the order-dual.}  The consequence relation $\pscp$ is {\em open} iff $\para$ is.
\end{defn}

Note that every parameter of the form $\tuple{[0, 1], (x, 1], [0, y)}$ is open, as is every parameter $\tuple{V, T, F}$ with finite $V$. But not all parameters are open; take $\tuple{[0,1], [.6, 1], [0, .4]}$. Let $X = \{x \in [0, 1] : x < .6\}$. Then $\lub X = .6 \in T$, but $T \cap X = \emptyset$.

\begin{fact} \label{eigenmi}
$\para$ is open {iff} $\pscp$ is closed under $\forall$R and $\exists$L.
\end{fact}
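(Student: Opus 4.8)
The plan is to prove the biconditional by contraposition in each direction, exploiting the duality between the two clauses of openness (Definition \ref{opendefn}): the $F$-clause (about $\glb$) governs $\forall$R, and the $T$-clause (about $\lub$) governs $\exists$L. Throughout I use that $\para$ tolerance is in effect a condition on the interpretation of predicates at domain elements, and that the reflexive--symmetric restriction on $\sim$ is in force. Recall that closure under a rule means: whenever the premise-sequent(s) are $\para$ tolerant valid, so is the conclusion-sequent, so its negation is the existence of a valid premise together with an invalid conclusion.

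For the \emph{left-to-right} direction (open implies closed under $\forall$R and $\exists$L), I argue by a witness-extraction contrapositive. Suppose $M$ is a $\para$ tolerant countermodel to a conclusion $\G \vdash \forall x A(x), \Del$, so $M(\forall x A(x)) = \glb\{I'(A(x)) : I' \text{ an } x\text{-variant}\} \in F$. All these instance-values lie in $V$, so the $F$-clause of openness gives some $x$-variant $I'$ with $I'(A(x)) \in F$. Letting $a$ be the eigenvariable (fresh for $\G, \Del$), I reinterpret $a$ to denote the witness $I'(x)$, obtaining a model $N$ that agrees with $M$ on all predicate interpretations and on the values of all members of $\G \cup \Del$, but with $N(A(a)) = I'(A(x)) \in F$. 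Since $N$ shares $M$'s predicate interpretations it is still $\para$ tolerant, so $N$ refutes the premise $\G \vdash A(a), \Del$. The case of $\exists$L is exactly dual, using the $\lub$-clause of the semantics and the $T$-clause of openness to extract a variant with instance-value in $T$.

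For the \emph{right-to-left} direction (not open implies failure of closure), I build explicit witnesses. If $\para$ is not open, either its $F$-clause fails --- giving $X \subseteq V$ with $g := \glb X \in F$ but $X \cap F = \emptyset$ --- or its $T$-clause fails, dually. Consider the first case and aim to break $\forall$R. The naive idea of realizing $X$ as the range of an atom $Px$ fails: one can always make $Pa$ itself land in $F$, so the premise $\vdash Pa$ would be \emph{invalid}. The fix, and the crux of the proof, is to \emph{force} the premise valid using tolerance. Take $A(x) = Px$, $\Del = \emptyset$, and $\G = \{Pc,\ \forall y \forall z\,(y \sim_P z)\}$ for a constant $c$ fresh for $a$. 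In any $\para$ tolerant countermodel to $\G \vdash Pa$, the second premise forces every $M(y \sim_P z)$ into $T$ (since $T$ is an upset containing the $\glb$), hence out of $F$; combined with $M(Pc) \in T$, tolerance then forbids any element from receiving a $P$-value in $F$, so $M(Pa) \notin F$ and no countermodel exists --- the premise is valid. Meanwhile the conclusion $\G \vdash \forall x Px$ is refuted by the model with domain $\{c\} \cup \{d_x : x \in X\}$, setting $M(Pc)=1$, $M(Pd_x)=x$, and all similarities to $1$: here $M(\forall x Px)=\glb(\{1\}\cup X)=g \in F$, every $P$-value avoids $F$ so tolerance holds vacuously, and $M(\forall y\forall z(y\sim_P z))=1\in T$.

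The $T$-clause failure is handled dually to break $\exists$L: take $A(x)=Px$, $\G=\{\forall y\forall z\,(y\sim_P z)\}$, $\Del=\{Pc\}$, so that in any countermodel the forced-positive similarities plus $M(Pc)\in F$ let tolerance forbid $M(Pa)\in T$, validating the premise $\G, Pa \vdash Pc$; while the model with $M(Pc)=0$, $M(Pd_x)=x\in X$, all similarities $1$, refutes $\G, \exists x Px \vdash Pc$ since $M(\exists x Px)=\lub(\{0\}\cup X)\in T$. In every case the eigenvariable $a$ is fresh for the relevant contexts. I expect the main obstacle to be precisely this right-to-left step: securing premise validity, which cannot come from the formula's own range (whose $\glb$ or $\lub$ would inherit the same avoidance) and must instead be engineered through the tolerance anchor $\forall y\forall z(y\sim_P z)$ together with the side atom $Pc$.
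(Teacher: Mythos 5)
Your proof is correct and follows essentially the same route as the paper's: the left-to-right direction is the identical witness-extraction argument via the eigenvariable, and the right-to-left direction uses the same countermodel construction (realize the witnessing set $X$ as the $P$-values of fresh domain elements with all similarities set to $1$, so that $\para$ tolerance holds vacuously while the quantified formula lands in $F$ or $T$). The only cosmetic difference is in the anchoring: you force premise-validity semantically using $\forall y\forall z(y \sim_P z)$ together with a fresh constant $c$, whereas the paper refutes the sequents $\exists y Py, \forall x(x \sim_P a) \vdash Pa$ and $Pa, \forall x(a \sim_P x) \vdash \forall y Py$, which closure under $\exists$L or $\forall$R (combined with the already-established closure under Id, $\forall$L, and Tol) would otherwise force to be valid.
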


\begin{proof}
{LTR:} Let $\para = \tuple{V, T, F}$.
 Suppose $\G \not\pscp \forall x A(x), \Del$. Then there is some model $M$ with: $M(\gamma) \in T$ for all $\gamma \in \G$; $M(\delta) \in F$ for all $\delta \in \Del$; and $M(\forall x A(x)) \in F$. Since $M(\forall x A(x)) = \glb(\{M'(A(x)) : M' \text{ is an $x$-variant of } M\})$ and $\para$ is open, there must be some $x$-variant $M'$ of $M$ such that $M'(A(x)) \in F$. Consider now the $a$-variant $M''$ of $M$ such that $M''(a) = M'(x)$. Since $a$ is an eigenvariable, $M''$ matches $M$ on everything in $\G$ and $\Del$. So $\G \not\pscp A(x), \Del$, since $M''$ is a $\para$ tolerant countermodel.

$\exists$L is similar.\medskip


RTL: By Fact \ref{clutter}, we know that $\pcp$ obeys Id and Tol for any $\para$.
If $\pcp$ is also closed under $\exists$L, it follows that $\exists y P y, \forall x (x \sim_P a) \pcp Pa$, so providing a counterexample to this would show that $\pcp$ isn't closed under $\exists$L.
Similarly, if $\pcp$ is closed under $\forall$R, it follows that $Pa, \forall x (a \sim_P x) \pcp \forall y Py$, so providing a counterexample to this would show that $\pcp$ isn't closed under $\forall$R.

So suppose $\para = \tuple{V, T, F}$ is not open.
Then there is $X \subseteq V$ with either: $\lub{X} \in T$ but $X \cap T = \emptyset$, or $\glb{X} \in F$ but $X \cap F = \emptyset$.
Take a model $M$ with domain $X \cup \{a\}$, for some $a \not\in X$; let every constant term of the language denote $a$.
For every $x \in X$, let $M(P)(x) = x$, let $M(\sim_P)$ take every pair of objects to 1, and let all other predicates take every object to 1.

If $\lub{X} \in T$ but $X \cap T = \emptyset$, then let $M(P)(a) = 0$.
In this case, $M$ is a $\para$ counterexample to $\sqq{\exists y P y, \forall x (x \sim_P a)}{Pa}$; it is $\para$ tolerant since there is no $z$ in the domain with $M(P)(z) \in T$.

If $\glb{X} \in F$ but $X \cap F = \emptyset$, then let $M(P)(a) = 1$.
In this case, $M$ is a $\para$ counterexample to $\sqq{Pa, \forall x (a \sim_P x)}{\forall y P y}$; it is $\para$ tolerant since there is no $z$ in the domain with $M(P)(z) \in F$.

\end{proof}

We now have a lot of pieces scattered around. Putting them together:

\begin{thm} \label{soparast}
For any proper symmetric open parameter $\para$, $\pscp \;=\; \STsc$.
\end{thm}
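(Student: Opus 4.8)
The plan is to prove the two inclusions separately. The inclusion $\pscp \subseteq \STsc$ is already in hand: since $\para$ is proper, Fact~\ref{parainst} delivers it directly. So all the work lies in the reverse inclusion $\STsc \subseteq \pscp$, and here I would argue proof-theoretically rather than by a direct model construction. A direct construction is tempting --- crispify a $\para$ tolerant countermodel down to three values by the pointwise map sending $T$ to $1$, $F$ to $0$, and everything else to $\half$ --- but it founders on the quantifiers: such a map need not commute with infinitary greatest lower bounds and least upper bounds. For instance, under Smith's parameter the set $(.5, 1]$ lies entirely in $T$ yet has greatest lower bound $.5$, which the map would send to $\half$; the map thus fails to preserve greatest lower bounds, so the naive crispification is not of the right kind. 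This is exactly the sort of breakdown the proof-theoretic route sidesteps.

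Instead I would route the reverse inclusion through the sequent calculus of Figure~\ref{stsimcalc}. The key claim is that for proper symmetric open $\para$ this calculus is \emph{sound} for $\pscp$, i.e.\ that $\pscp$ is closed under every one of its rules. I would verify this rule by rule, assembling the facts already proved: Id, K, $\sim$ref, $\sim$symL and $\sim$symR are handled by Fact~\ref{clutter}; Tol by Fact~\ref{paratol}; $\wedge$L, $\wedge$R, $\vee$L, $\vee$R, $\forall$L and $\exists$R by Fact~\ref{parami}; the negative rules $\neg$L, $\neg$R, $\to$L and $\to$R by the symmetric case of Fact~\ref{negmi}; and the eigenvariable rules $\forall$R and $\exists$L by the open case of Fact~\ref{eigenmi}. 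Since $\para$ is assumed both symmetric and open, every clause required by these facts is met, so every rule of the calculus is one under which $\pscp$ is closed, and the calculus is sound for $\pscp$.

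With soundness established, the reverse inclusion follows from completeness. Suppose $\G \STsc \Del$. By Fact~\ref{stsc} the calculus is complete for $\STsim$, so the sequent $\sqq{\G}{\Del}$ is derivable. By the soundness just established, derivability yields $\G \pscp \Del$. Hence $\STsc \subseteq \pscp$, and combining this with the inclusion from Fact~\ref{parainst} gives $\pscp \;=\; \STsc$, as required.

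The one point that needs care --- and that I expect to be the main obstacle --- is that Facts~\ref{parami} and~\ref{negmi} are phrased for the plain relations (indeed Fact~\ref{negmi} is stated using $\pcp$), whereas soundness for $\pscp$ requires closure with respect to \emph{$\para$ tolerant} countermodels. Here I would observe that the countermodel constructions in those proofs reuse the very same model (or an eigenvariable-variant of it) that witnesses failure of the conclusion-sequent; because $\para$ tolerance is a property of the model alone and is untouched by re-reading that model as a countermodel to a premise-sequent, each argument goes through verbatim once ``countermodel'' is read as ``$\para$ tolerant countermodel''. Granting this, all the closure facts genuinely hold for $\pscp$ and the rule-by-rule check is legitimate, completing the proof.
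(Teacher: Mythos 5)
Your proposal is correct and takes essentially the same route as the paper's own proof: one inclusion comes from Fact~\ref{parainst} via properness, and the converse from soundness of the Figure~\ref{stsimcalc} calculus for symmetric open $\pscp$ (assembled exactly as you do from Facts~\ref{paratol}, \ref{clutter}, \ref{parami}, \ref{negmi}, and \ref{eigenmi}) together with its completeness for \STsim\ (Fact~\ref{stsc}). Your closing observation---that the closure facts stated for the plain relations transfer to $\pscp$ because their countermodel constructions reuse the very same model (or a variant that remains $\para$ tolerant)---is a point the paper leaves implicit, and it is handled correctly.
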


\begin{proof}
Fact \ref{parainst} gives us that $\G \pscp \Del$ implies $\G \STsc \Del$ for proper $\para$; it remains only to show the converse. Facts \ref{paratol}, \ref{clutter}, \ref{parami}, \ref{negmi}, and \ref{eigenmi} together establish that the sequent calculus in Figure \ref{stsimcalc} is sound for symmetric open $\pscp$. But since this calculus is complete for $\STsc$ (Fact \ref{stsc}), we're done.
\end{proof}

That is, there is only one proper symmetric open $\para$ tolerant consequence relation, and it is exactly the consequence relation of \STsim! The consequence relation of \STsim\ thus occupies a natural place among parameterised consequence relations. The steps needed to dodge sorites trouble (properness) and ensure operational classicality (symmetry and openness) also ensure that the consequence relation of \STsim\ is the only choice. As these are natural desiderata, \STsim\ looms large. 

Earlier, we identified Smith's approach as the parameterised consequence relation with the parameter S = $\tuple{[0, 1], (.5, 1], [0, .5)}$. Turning to its tolerant extension, we can see that this parameter is proper, symmetric, and open, so $\psc{\text{S}}$, like all such consequence relations, is the same as $\STsc$. We thus reckon that anyone interested in taking a Smith-style approach to consequence in a language including $\sim$ relations should arrive at \STsim\ as their desired consequence relation.

\section{How many truth values?} \label{discussion}

We have, in effect, presented many different model theories for the same consequence relation. 
Recall our earlier $n$-valued model theory for classical logic, based on the parameter $\tuple{V_n, \{1\}, \{0\}}$ with $V_n = \{0, 1/(n - 1), \ldots, (n - 2)/(n - 1), 1\}$.
This parameter is proper when $n \geq 3$, symmetric, and since $V_n$ is finite, it is also open. So we also have $n$-valued presentations of \STsim\ for all $n \geq 3$. Of course, we also have continuum-valued presentations, provided by parameters of the form $\tuple{[0, 1], (x, 1], [0, 1-x)}$, for $x \in [.5, 1)$; these are all proper, symmetric, and open, and Smith-consequence is among them, with $x = .5$. We can have countably-valued presentations as well. For example, let $V = \{x \in [0, 1] : \exists n \in \mathbb{N}$ s.t.$\ x = \frac{1}{2^n}$ or $x=\frac{2^n - 1}{2^n} \}$. That is, $V = \{0, \ldots, \frac{1}{16}, \frac{1}{8}, \frac{1}{4}, \frac{1}{2}, \frac{3}{4}, \frac{7}{8}, \frac{15}{16}, \ldots, 1\}$. $V$ is closed and countable. Now consider the parameter $\para = \tuple{V, [\frac{3}{4}, 1], [0, \frac{1}{4}]}$. This parameter is proper, symmetric, and open, and so subject to Theorem \ref{soparast}.\footnote{We can also see the original four-valued presentation of the strict-tolerant approach in \cite{cervr:tcs} through this lens; the appropriate parameter is $\tuple{\{0, x, 1 - x, 1\}, \{1\}, \{0\}}$, for any $0 < x < \half$.
These parameters too are proper, symmetric, and open, and so subject to Theorem \ref{soparast}.}

What to make of this situation? We argued in \S\ref{jolt} that three-valued approaches, especially those that validate tolerance, need not face the jolt problem, if appropriately interpreted.
We showed in \S\ref{paracons} that \STsim\ is a three-valued approach that validates tolerance, and that it also captures exactly the arguments valid in Smith's continuum-valued framework, when it is extended with similarity predicates. 
There are two purposes, then, for which Smith's continuum-many values are otiose: they are not needed to avoid the jolt problem, nor are they needed to do any logical work.
For each of these purposes, three values suffice. 

Earlier, we distinguished between the referential and the inferential roles that truth values play in a theory of meaning. Cast in terms of that distinction, we see that Smith's theory uses infinitely many values as references of sentences, but our argument shows that only three values are needed at the inferential level to capture his consequence relation. By contrast, the strict-tolerant theory basically uses three values at both levels, referential and inferential: on our approach, the third value is assigned to a vague predicate to capture borderline status, and the same third value is used to validate the tolerance principle. Smith argues that any finite assignment of values to vague sentences will involve `jolts' between distinct semantic values, but our contention is that a jolt-free approach is achieved first and foremost at the inferential level in the form of tolerance principles. In brief, the strict-tolerant account trims down Smith's referential apparatus to the minimum number of truth values needed at the inferential level.


This is not yet to say that three values will always be enough to satisfy any further desiderata you might consider for a theory of vagueness. Nor can we claim that an adequate theory of meaning is one that would necessarily use the same number of truth values at the referential and at the inferential level.
We close, then, by pausing to forestall some objections. 
First of all, you might note that the notion of validity we have taken from \cite{smith:vdt} is not a usual one for fuzzy treatments of vagueness.
This is certainly true; for other ways to proceed, see eg \cite{machina:vp, paoli:rfa,smith:cdss,paoli2019degrees}.
But our goal here has not been to provide an overview of fuzzy theories of vagueness; rather, it has been to explore the relations between a particular well-worked-out view---the one of \cite{smith:vdt}---and the nontransitive project advocated in our previous work.


Touching on larger issues, you might worry that more than three values may be needed to account for the semantics of comparatives (taking ``$a$ is taller than $b$'' to imply that the degree of truth of ``$a$ is tall'' is greater than that of ``$b$ is tall'' \cite{paoli:rfa}), for the semantics of modifiers such as ``very'' or ``determinately'' \cite{lakoff:hedges}, or to model degrees of closeness to clear cases \cite{edgington:vbd, dd:wgm}. 
We haven't shown that three values are sufficient for such purposes, but that was not the goal; what counts as a minimal number of truth values for a fully adequate total semantic theory remains a broader issue.\footnote{Of these, the worry we have encountered most frequently is the one to do with comparatives. 
The topic is too broad for us to tackle in any detail here, but we will make two quick remarks.

First, we take it that comparative sentences involving gradable adjectives, such as ``A is younger than B'', can be treated without the need for additional truth values. 
Degrees of youth don't have to be degrees of truth. See in particular \cite{klein:spca} for an influential (three-valued) approach based on delineations -- with \cite{burnett:pccc} for a recent development--, and \cite{kennedy2007vagueness} for an influential (two-valued) approach based on scalar degrees (distinct from degrees of truth).

Second, even if ``true'' is \emph{itself} a gradable adjective (see \cite{henderson2021truth, egre2021half}), and so admits of degrees (which is surely contentious), it would not follow that there is any need to wire such degrees into the semantics of atomic predications in full generality.
}

For now, we have at least shown that some of the fuzziness in Smith's original approach can be shaved with Occam's razor.%


\begin{acknowledgement}
This paper has gone through scattered periods of activity followed by long phases of sleep since its inception, and we are grateful to the editors for this opportunity to eventually publish it. Special thanks to audiences in Cerisy-la-Salle, where the first sketch of this paper was presented (Cerisy Context Conference, 2011), in Opole (ESSLLI 2012 workshop on Trivalent Logics and their Applications), NYU (Vagueness seminar held in 2013), Vienna (Theory and Logic Group's Research Seminar, 2014), Torino (Reasoning Club Conference, 2017), and Cagliari (Trends in Logic XXII, 2022). We thank four anonymous referees for comments on different occasions, including one referee for this volume. We also thank Christian Ferm\"uller, Kathrin Gl\"uer-Pagin, Francesco Paoli, Peter Pagin, Jeff Paris, and Graham Priest, for valuable exchanges at different stages of this work. The research leading to these results has received support from the Agence Nationale de la Recherche (grants TRILOGMEAN ANR-14-CE30-0010-01 and AMBISENSE ANR-19-CE28-0019-01), from the Department of Cognitive Studies of Ecole normale sup\'erieure (grant FRONTCOG ANR-17-EURE-0017), from the NWO funded `Language in Interaction' project, and from the ESSENCE-Marie Curie Initial Training network FP7-PEOPLE-2013-ITN), funded by the European Commission (grant agreement no. 607062). 
\end{acknowledgement}

\comment{
What to make of this situation? 
We argued in \S\ref{jolt} that three-valued approaches, especially those that validate tolerance, need not face the jolt problem.
We showed in \S\ref{paracons} that \STsim\ is a three-valued approach that validates tolerance, and that it also captures exactly the arguments valid in Smith's continuum-valued framework, when it is extended with similarity predicates.
Smith's continuum-many values, then, are otiose; they are not justified by a response to the jolt problem, nor do they do any logical work.
Just the same benefits can be had on a three-valued theory.}

\comment{
We do not want (here) to take a firm stance, but we will sketch two possible understandings, based on what we will call {\em referential} and {\em inferential} approaches to the models that have been our main tools here.

A referential approach to these models takes them to represent something about the relation between language and reality. On such an understanding, the members of the $V$ coordinate in our parameters should be understood as various such relations that formulas might bear to reality, and the $T$ and $F$ coordinates important {\em kinds} of such relations that are tied to the validity of arguments. For example, in the classical parameter $\tuple{\{0, 1\}, \{0\}, \{1\}}$, we might interpret 0 as `falsity' and 1 as `truth'. In the Smith parameter $\tuple{[0,1], (.5, 1], [0, .5)}$, we can interpret each value as a degree of truth, $T$ as the values that are `truer than false', and $F$ as the values that are `falser than true'.\footnote{In the terminology of \cite[p.\ 223]{smith:vdt}, these last two categories can be called `inference grade' and `not even assertion grade'.} In the ST parameter $\tuple{\{0, \half, 1\}, \{1\}, \{0\}}$, the values can simply be interpreted as truth values, with 1 understood as `strict truth' and 0 `not even tolerant truth'. (See \cite{cervr:tcs, cervr:rtt} for discussion of strict and tolerant truth.)

On this kind of understanding, we reckon our results here should be seen as follows: it turns out that there are many different pictures of the relation between language and reality that can agree on \STsim\ as the right way to understand validity for a language that can express $P$-similarity for its predicates $P$. If any one of these pictures, whether finite- or continuum-valued, is a correct understanding, then \STsim\ is a correct theory of validity. This is the positive side of the situation: we can reach limited agreement on validity without having to agree on the final shape of our models. But there is a negative side to this agreement as well: since so many divergent pictures all result in \STsim, we cannot appeal to the plausibility of \STsim\ as a theory of validity to support any one of these pictures over the others. In particular, \STsim\ is no more (or less) `three-valued' than it is `continuum-valued'. Decisions about numbers of values must be made on other grounds than these.

An inferential approach to these models, on the other hand, understands them more indirectly. Such an approach sees the models as {\em witnesses} to the {\em invalidity} of certain arguments: arguments whose premises get mapped into $T$ and whose conclusions get mapped into $F$. The clearest statement of a view like this is probably \cite{restall:tvpt}. The coordinates of our parameters, on an inferential understanding, should be understood entirely via their relations to validity and invalidity. Just what understanding this gives to them, of course, will then depend on how validity and invalidity are understood.

From a perspective like this, there is some reason to prefer parameters whose first coordinate is $\{0, \half, 1\}$: these are the parameters resulting in models that draw no needless distinctions. All that matters for validity is which values are in $T$ and which are in $F$; supposing we are working with a proper parameter, this leaves exactly three possibilities, one of which is represented by each value. This would not in general settle the issue; as we pointed out in Footnote \ref{malhum}, \cite{humberstone:hl} shows that every monotonic logic has a four-valued presentation, but we would not want to insist that there is never a reason to use more than four values for such logics. However, the most obvious reason to use more values than are needed is to register differences in what \cite[p.\ 48]{dummett:lbom} calls {\em ingredient sense}; and here, the existence of a {\em compositional} three-valued model theory for \STsim\ (based on the ST parameter $\tuple{\{0, \half, 1\}, \{1\}, \{0\}}$) ensures that three values are enough to register even the needed differences in ingredient sense. So why would we use more?

While the exact import of our discussion, then, hangs on issues in the philosophy of logic, and in particular on how our models should be understood, we will break our discussion here. What is clear is that \STsim\ is a distinguished consequence relation among the paramaterised ones: it is the only consequence relation in this space that is tolerant and operationally classical without falling victim to sorites reasoning.
}

\bibliographystyle{apalike} 
\bibliography{tdt}

\end{document}